\newcommand{\N}{\mathbb{N}}
\newcommand{\Z}{\mathbb{Z}}
\newcommand{\Q}{\mathbb{Q}}
\newcommand{\R}{\mathbb{R}}
\newcommand{\CC}{\mathbb{C}}
\newcommand{\HH}{\mathbb{H}}
\newcommand{\inj}{\operatorname{inj}}
\newcommand{\vol}{\operatorname{Vol}}
\newcommand{\Isom}{\mathrm{Isom}}
\newtheorem{theorem}{Theorem}[section]
\newtheorem{corollary}{Corollary}[theorem]
\newtheorem{lemma}[theorem]{Lemma}
\newtheorem{proposition}[theorem]{Proposition}
\newtheorem{definition}[theorem]{Definition}
\newtheorem{conjecture}[theorem]{Conjecture}
\newtheorem{question}[theorem]{Question}
\theoremstyle{remark}
\newtheorem{remark}[theorem]{Remark}
\theoremstyle{remark}
\renewcommand{\qed}{\hfill$\scriptstyle\blacksquare$}
\title{Random walks on cocompact Fuchsian and Kleinian groups}
\author{Nikolay Bogachev}
\address{Department of Computer and Mathematical Sciences, University of Toronto Scarborough, 1095 Military Trail, Toronto, ON M1C 1A3, Canada}
\email{n.bogachev@utoronto.ca}
\author{Petr Kosenko}
\address{Department of Mathematics, University of British Columbia, Canada}
\email{pkosenko@math.ubc.ca}
\author{Giulio Tiozzo}
\address{Department of Mathematics, University of Toronto, 40 St George Street, Toronto ON, M5S~2E4, Canada}
\email{tiozzo@math.utoronto.ca}
\begin{document}

\setcounter{tocdepth}{1}

\begin{abstract}
The question of the singularity at infinity of the hitting measure of random walks has a long history, originating from the work of Furstenberg in the 1960s. In 2011, Kaimanovich and Le Prince conjectured that the hitting measure of any finitely supported random walk on a discrete subgroup $\Gamma$ of $\mathrm{SL}_N(\R)$ is singular at infinity with respect to the Lebesgue measure. Using algebraic and geometric convergence and hyperbolic Dehn filling, we prove the singularity conjecture for certain measures on ``most'' cocompact Fuchsian and Kleinian groups. 
\end{abstract}

\subjclass[2020]{}

\maketitle

\section{Introduction}

This paper concerns hitting measures of finitely supported random walks on finitely generated discrete subgroups of isometries of hyperbolic spaces $\HH^d$. This subject is currently of growing interest due to the rich interplay between geometric group theory, geometric topology, probability and dynamics. One of the central questions in this field is the singularity conjecture of Kaimanovich and Le Prince~\cite{KLP11}, which can be broadly formulated for discrete subgroups $\Gamma < \mathrm{SL}_N(\R)$ as follows.

\begin{conjecture}[Kaimanovich and Le Prince \cite{KLP11}]\label{conj:sing}
    The hitting measure of any random walk finitely supported on a discrete subgroup $\Gamma < \mathrm{SL}_N(\R)$ is singular at infinity with respect to the Lebesgue measure.
\end{conjecture}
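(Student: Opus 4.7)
The plan is to prove the conjecture for cocompact Fuchsian and Kleinian groups by a deformation argument that transfers singularity from cusped non-uniform lattices to their cocompact Dehn-filling approximants. First, I would reformulate singularity in terms of the fundamental inequality. For a cocompact $\Gamma < \Isom(\HH^d)$ with $d \in \{2,3\}$, the Patterson--Sullivan measure agrees with Lebesgue on $\partial \HH^d = S^{d-1}$, and the latter has Hausdorff dimension equal to the volume entropy $v = d-1$. For a finitely supported non-elementary random walk with law $\mu$, the hitting measure $\nu$ is exactly dimensional with $\dim \nu = h(\mu)/(\ell(\mu)v)$ by Tanaka's dimension formula, and absolute continuity of $\nu$ with respect to Lebesgue is equivalent to $h(\mu) = \ell(\mu)(d-1)$. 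So singularity is equivalent to the strict fundamental inequality $h(\mu) < \ell(\mu)(d-1)$, which is what I aim to establish.

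Next I would realize $\Gamma$ as a hyperbolic Dehn filling of a cusped lattice. For ``most'' cocompact $\Gamma$, Thurston's hyperbolic Dehn surgery theorem (and its Fuchsian analog via degeneration in moduli space) produces a cusped lattice $\Gamma_\infty$ and a sequence of cocompact fillings $\Gamma_n \to \Gamma_\infty$ converging both algebraically and geometrically, with $\Gamma$ realized as some $\Gamma_{n_0}$ of sufficiently deep filling. Transport $\mu$ along this degeneration to a corresponding finitely supported measure $\mu_\infty$ on $\Gamma_\infty$. For the cusped limit, the strict fundamental inequality $h(\mu_\infty) < \ell(\mu_\infty)(d-1)$ should be established directly: excursions of the random walk into horoball neighborhoods of the cusps produce exponential stretching, which forces the hitting dimension strictly below $d-1$ at the boundary.

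The final step is to transfer the strict inequality from $\Gamma_\infty$ back to the approximating $\Gamma_n$ via continuity of $h$ and $\ell$ under algebraic and geometric convergence $\Gamma_n \to \Gamma_\infty$ with compatible measures $\mu_n \to \mu_\infty$. If such continuity holds, the strict inequality propagates to all sufficiently deep fillings, giving $h(\mu_n) < \ell(\mu_n)(d-1)$ and hence singularity of $\nu_n$; since $\Gamma = \Gamma_{n_0}$ for large $n_0$, we conclude singularity for $\Gamma$.

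The main obstacle is precisely this last continuity statement. Weak convergence $\nu_n \to \nu_\infty$ does not by itself preserve singularity --- absolutely continuous measures can weakly converge to a singular one --- so the argument must be quantitative and proceed through the dynamical invariants $h$ and $\ell$. Their continuity along a Dehn-filling family requires uniform Ancona-type inequalities for the Green function across the deforming groups, which is delicate because the Margulis tubes in $\HH^d/\Gamma_n$ collapsing to cusps have core curves of length tending to zero (equivalently, tubes of unbounded modulus). One must therefore control uniformly in $n$ how often random-walk trajectories enter these thin parts and how their contribution to entropy and drift behaves in the limit, which appears to be the core technical difficulty of the argument.
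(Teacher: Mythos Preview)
First, note that the statement in question is a \emph{conjecture}; the paper does not prove it in general, only for special families of cocompact Fuchsian and Kleinian groups equipped with \emph{compatible} measures (Theorems~\ref{th:main-Kleinian}--\ref{th:main-Fuchsian}). Your proposal is really a strategy for those special cases, so the comparison should be read at that level.

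Your overall architecture matches the paper's: reduce singularity to the strict inequality $h < (d-1)\ell$, use the known strict inequality for the cusped limit $\Gamma_\infty$ (this is Corollary~\ref{singularcorollary}, not something you need to re-derive via cusp excursions), and then transfer it to the Dehn-filled $\Gamma_n$. The crucial divergence is in how the transfer is done. You frame the obstacle as continuity of both $h$ and $\ell$ along the deformation, and you correctly flag entropy continuity as hard (uniform Ancona inequalities, thin-part control, etc.). The paper bypasses this difficulty entirely. Because $\Gamma_n = \rho_n(\Gamma_\infty) \cong \Gamma_\infty/\ker\rho_n$ is a \emph{quotient} of the cusped group, pushing $\mu_\infty$ forward to $\mu_n := (\rho_n)_*\mu_\infty$ gives the elementary monotonicity $h(\mu_n) \le h(\mu_\infty)$ (Proposition~\ref{prop:entropy-inequality}). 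No continuity of $h$ is needed --- indeed the paper explicitly leaves it as an open question (Question~\ref{q:conv-entropy}). Only the drift requires continuity, and that is supplied by Bocker-Neto--Viana. Then $h_n \le h_\infty < (d-1)\ell_\infty$ together with $\ell_n \to \ell_\infty$ finishes the argument.

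This also explains why your direction of measure transport is problematic. You want to start from an arbitrary $\mu$ on the cocompact $\Gamma$ and ``transport'' it to $\mu_\infty$ on $\Gamma_\infty$; but the surjection goes $\Gamma_\infty \twoheadrightarrow \Gamma$, so there is no canonical lift of $\mu$ upward, and without one you lose the entropy inequality. The paper accepts this limitation: its results apply only to measures on $\Gamma_n$ that arise as pushforwards from a fixed $\mu$ on $\Gamma_\infty$ (the ``compatible'' measures of Section~\ref{sec:compatible}), not to arbitrary finitely supported measures on $\Gamma_n$. Your more ambitious scope would genuinely require the entropy continuity you identified, which remains open. (Minor point: your dimension formula should read $\dim\nu = h(\mu)/\ell(\mu)$, not $h(\mu)/(\ell(\mu)v)$; your conclusion $h < (d-1)\ell$ is nonetheless correct.)
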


\noindent This conjecture has, in fact, a very long history originating from the work of Furstenberg \cite{Fur63, Fur71} who proved that for any lattice $\Gamma$ in a semisimple Lie group, there exists an infinitely-supported random walk on $\Gamma$ with finite first moment in the hyperbolic metric such that the hitting measure is absolutely continuous on the boundary.

The singularity conjecture remains widely open even for lattices in Lie groups, including $\mathrm{PSL}_2(\R)$, $\mathrm{PSL}_2(\CC)$, and $\mathrm{PO}_{d,1}(\R)$. We are primarily interested in {\em hyperbolic lattices} $\Gamma < \Isom(\HH^d)\cong\mathrm{PO}_{d,1}(\R)$. The orientation-preserving isometry group $\Isom^+(\HH^d)$ of $\HH^d$ can be identified with $\mathrm{PO}^\circ_{d,1}(\R)$, and in low dimensions we have $\Isom^+(\HH^2) \cong \mathrm{PSL}_2(\R)$ and $\Isom^+(\HH^3) \cong \mathrm{PSL}_2(\CC)$. Discrete subgroups of the latter two Lie groups are called {\em Fuchsian} and {\em Kleinian}, respectively. 

As mentioned above, even in the special case of hyperbolic lattices, Conjecture~\ref{conj:sing} is still open; however, some progress was obtained. The singularity conjecture is proven for {\em non-uniform} lattices in $\mathrm{PSL}_2(\R)$, see Guivarc'h--Le Jan~\cite{GLJ93}, Deroin--Kleptsyn--Navas~\cite{DKN09}, and Blachère--Haïssinsky--Mathieu~\cite{blachere2011harmonic}. Recently, Ran\-decker--Tiozzo~\cite{RT21} proved this conjecture for random walks on {\em all non-uniform} hyperbolic lattices $\Gamma < \mathrm{PO}_{d,1}(\R)$. A similar result for cocompact Fuchsian groups with centrally symmetric fundamental domains was obtained by Kosenko--Tiozzo \cite{KT22}. Not that much is known in higher rank either. For example, if $G$ is a semisimple Lie group with Kazhdan's property (T), $\Gamma < G$ is an infinite-covolume discrete Zariski-dense subgroup, and $\mu$ is a symmetric probability measure on $\Gamma$ such that $\langle\mathrm{supp}\,\mu\rangle = \Gamma$, then the Furstenberg measure $\nu$ on $G/P$ is singular with respect to the Lebesgue measure; see recent preprints Lee--Tiozzo--Van Limbeek~\cite{LTvL25} and Kim--Zimmer \cite{KZ25}. 

We prove the singularity conjecture for large families of cocompact Fuchsian and Kleinian groups. To formulate the main result, we need to introduce some preliminary information.

The key novel idea of this paper is to geometrically approximate groups for which the singularity conjecture is proven by a sequence of other groups. Our results and methods rely on the theory of {\em small deformations} and {\em geometric convergence} of finite-volume hyperbolic $3$-manifolds. Geometric convergence is equivalent to the convergence in the {\em Chabauty topology} of subgroups of $\mathrm{PSL}_2(\CC)$. On the other hand, small deformations mean the deformations of a certain representation and give rise to the {\em algebraic convergence} of representations of a given group. It is well known that among lattices in $\Isom(\HH^d)$, $d \ge 3$, only nonuniform lattices in $\Isom(\HH^3)$ admit small deformations (for uniform lattices it was proved by Weil \cite{Weil62}, while for nonuniform lattices it is a result of Garland--Raghunathan~\cite{GR70}). 

Nonuniform lattices in $\Isom(\HH^3)$ are usually not locally rigid, meaning that their small deformations are not conjugate to the initial representation, as was discovered by Thurston. The small deformations we discuss here do not lead, in general, to lattices or even discrete subgroups of $\Isom(\HH^3)$. However, Thurston's {\em hyperbolic Dehn filling} theorem allows one to obtain lattices under certain parameters of such deformations.  We discuss the relation between small deformations, geometric convergence, and hyperbolic Dehn filling in Section \ref{sec:alg-geom-conv}. Moreover, we note that for any geometrically convergent sequence of lattices $\Gamma_n \to \Gamma$ one can always choose measures $\mu_n$ on $\Gamma_n$ and $\mu$ on $\Gamma$ which ``are compatible with each other''. This compatibility comes from the hyperbolic Dehn filling on the limit lattice $\Gamma$; for a precise definition, see Section \ref{sec:compatible}. Here, we just highlight the fact that picking a single measure $\mu$ on $\Gamma$ is enough to ``translate'' it to measures $\mu_n$ on all but finitely many sequence elements $\Gamma_n$ such that all $\mu_n$ and $\mu$ are pairwise compatible.
Finally, a measure $\mu$ on a lattice $\Gamma$ is non-degenerate if $\Gamma$ is the group generated by the support of $\mu$.

\begin{theorem}\label{th:main-Kleinian} 
    For any geometrically convergent sequence $\Gamma_n \to \Gamma$ of torsion-free lattices $\Gamma_n, \Gamma < \mathrm{PSL}_2(\CC)$, where $\Gamma_n$ are not pairwise conjugate, equipped with any sequence of compatible, non-degenerate, finitely-supported measures $\mu_n$ and $\mu$, respectively, there exists a number $N = N(\mu) > 0$ such that for all $n>N$ the hitting measure $\nu_{\mu_n}$ of the random walk on $\Gamma_n$ driven by $\mu_n$ is singular with respect to the Lebesgue measure on the ideal boundary $\partial_\infty \HH^3 \cong \mathbb{S}^2$.
\end{theorem}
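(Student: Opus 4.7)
My strategy is to reduce the problem to the already-known singularity theorem of Randecker--Tiozzo \cite{RT21} applied to the limit group $\Gamma$, and then to transport the conclusion from $\Gamma$ to the sequence $\Gamma_n$ by continuity of the relevant probabilistic invariants along compatible geometric convergence. The first step is to observe that $\Gamma$ itself is necessarily non-uniform. Indeed, if $\Gamma$ were cocompact, then Weil local rigidity \cite{Weil62} would force every lattice sufficiently close to $\Gamma$ in the Chabauty topology to be conjugate to $\Gamma$; in particular the $\Gamma_n$ would be pairwise conjugate for all large $n$, contradicting the hypothesis. Hence $\Gamma$ is a non-uniform torsion-free lattice in $\mathrm{PSL}_2(\CC)$, and \cite{RT21} applies to give that $\nu_\mu$ is singular with respect to Lebesgue on $\partial_\infty \HH^3 \cong \mathbb{S}^2$.

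To propagate singularity along the sequence I would recast it in a quantitative form via the fundamental inequality. Writing $h_\mu$ for the asymptotic entropy and $l_\mu$ for the drift of the walk, one always has $h_\mu \le 2 l_\mu$ for lattices in $\Isom^+(\HH^3)$ (the critical exponent equals $2$). The Hausdorff dimension formula $\dim_H \nu_\mu = h_\mu/l_\mu$ of Tanaka then shows that singularity of $\nu_\mu$ with respect to Lebesgue on $\mathbb{S}^2$ is equivalent to the strict inequality $h_\mu < 2l_\mu$. In particular, our $\mu$ on $\Gamma$ exhibits a definite gap $\varepsilon := 2 l_\mu - h_\mu > 0$.

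The compatibility condition between $\mu_n$ and $\mu$ is set up, via hyperbolic Dehn filling, so that the generators in which $\mu_n$ differs from $\mu$ are supported in the thin Dehn surgery tubes filling the cusps of $\Gamma$, and these tubes are pushed arbitrarily far into the thin part as $n\to\infty$. From this one expects two continuity statements: $l_{\mu_n} \to l_\mu$, which is essentially a first-moment computation once one identifies the translation lengths of the cuspidal generators via the Dehn filling parameters; and $h_{\mu_n} \to h_\mu$, a Poisson-boundary statement obtained by coupling. Granting both, for all $n > N(\mu)$ one has $h_{\mu_n} < 2 l_{\mu_n}$, hence $\dim_H \nu_{\mu_n} < 2$, and therefore $\nu_{\mu_n}$ is singular with respect to the Lebesgue measure on $\mathbb{S}^2$.

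The main obstacle I anticipate is the continuity of the entropy. Drift is a first moment and behaves well under finitely supported, compatibly chosen measures; entropy, on the other hand, depends on the global Poisson boundary, whose very topology (limit set together with orbit of cusps versus the full sphere) changes through the sequence. A natural route is to construct a coupling of the walks on $\Gamma_n$ and on $\Gamma$, driven by a common i.i.d.\ source, which agrees until the first entrance into a Dehn surgery tube, and then to bound the probability and the entropy cost of excursions into these tubes using the fact that their injectivity radii vanish and their depths diverge. A secondary technical issue is verifying in the non-proper, non-uniform setting that Tanaka's dimension formula yields a genuine equivalence between singularity and the strict fundamental inequality; this may require a separate argument for $\Gamma$, possibly extracted directly from the cusp analysis in \cite{RT21}.
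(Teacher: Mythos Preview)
Your overall architecture is right: show $\Gamma$ is non-uniform, deduce the strict fundamental inequality $h_\mu < 2\ell_\mu$ for the limit walk, and then propagate it along the sequence. The paper does exactly this. But your plan contains a genuine gap at the propagation step.

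You propose to prove $h_{\mu_n}\to h_\mu$ by coupling the walks and controlling excursions into the Dehn surgery tubes. This is precisely the step the paper does \emph{not} attempt; in fact the convergence of entropies is posed as an open question at the end of the paper (Question~\ref{q:conv-entropy}). Your own caveats about the Poisson boundary changing topology are well-founded: no such continuity statement is currently available.

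The paper bypasses this entirely, and the bypass rests on the meaning of ``compatible'' which you have slightly misread. Compatibility does not mean that $\mu_n$ and $\mu$ agree outside thin tubes; it means $\mu_n = (\rho_n)_*\mu$ where $\rho_n:\Gamma\twoheadrightarrow\Gamma_n$ is the Dehn filling quotient (so $\Gamma_n = \Gamma/\langle\!\langle\gamma_n\rangle\!\rangle$). Once you see that $\Gamma_n$ is a \emph{quotient} of $\Gamma$ and $\mu_n$ is the pushforward, a one-line computation (Proposition~\ref{prop:entropy-inequality}) gives the inequality $h_{\mu_n}\le h_\mu$ for every $n$, with no limit needed. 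Combining this with the drift continuity $\ell_{\mu_n}\to\ell_\mu$ (which the paper cites from Bocker-Neto--Viana \cite{BNV17} rather than arguing directly) and with $h_\mu < 2\ell_\mu$ yields
\[
h_{\mu_n}\ \le\ h_\mu\ <\ 2\ell_\mu\ \longleftarrow\ 2\ell_{\mu_n},
\]
so $h_{\mu_n}<2\ell_{\mu_n}$ for all large $n$. The equivalence between this strict inequality and singularity is taken from the Blach\`ere--Ha\"issinsky--Mathieu / Gekhtman--Tiozzo criterion (Theorem~\ref{BHMTheorem 1.4}); you do not need Tanaka's dimension formula, and in particular you do not need to worry about its validity in the non-uniform setting.

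In short: replace your proposed entropy continuity by the monotonicity $h_{\mu_n}\le h_\mu$ coming from the quotient structure, and the proof goes through.
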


In Section~\ref{sec:orbifolds}, we explain how the tools of geometric convergence and hyperbolic Dehn filling work for compact hyperbolic $3$-orbifolds, i.e. for cocompact Kleinian groups not necessarily torsion-free. The orbifold version of Theorem \ref{th:main-Kleinian} is the following.

\begin{theorem}\label{th:main-3-orbifold}
    Let $\Gamma < \mathrm{PSL}_2(\CC)$ be a non-uniform lattice such that each cusp of the (oriented) orbifold $\mathcal{O}=\HH^3/\Gamma$ is either a torus $\mathbb{T}^2$ or a pillowcase $S^2(2,2,2,2)$. Consider a sequence of lattices $\Gamma_n$ that geometrically converge to $\Gamma$. Then for any non-degenerate finitely supported measure $\mu$ on $\Gamma$ there is $N > 0$ such that for all $n>N$ the following conditions hold:
    \begin{enumerate}
        \item The orbifolds $\mathcal{O}_n = \HH^3/\Gamma_n$ are obtained by hyperbolic Dehn filling of one of the cusps of $\mathcal{O}$.
        \item There exist finitely supported measures $\mu_n$ on $\Gamma_n$ compatible with the measure $\mu$ on $\Gamma$.
        \item The hitting measure $\nu_{\mu_n}$ of the random walk driven by $\mu_n$ is singular with respect to the Lebesgue measure on the ideal boundary $\partial_\infty \HH^3 \cong \mathbb{S}^2$.
    \end{enumerate}
\end{theorem}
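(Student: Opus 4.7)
The plan is to follow the same strategy as the torsion-free Theorem~\ref{th:main-Kleinian}, replacing the manifold Dehn filling apparatus by the orbifold analogue developed in Section~\ref{sec:orbifolds}, and combining continuity of entropy and drift under compatible geometric convergence with the Randecker--Tiozzo singularity theorem for non-uniform lattices.

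For parts (1) and (2): the cusp hypothesis is what makes orbifold Dehn filling available, since the pillowcase $S^2(2,2,2,2)$ admits hyperbolic Dehn fillings along slopes compatible with its $\mathbb{Z}/2$ symmetry, while the toral cusps admit the classical Thurston fillings. The equivalence between Chabauty/geometric convergence $\Gamma_n \to \Gamma$ and Dehn surgery parameters tending to $\infty$ on a fixed cusp (Section~\ref{sec:alg-geom-conv}, adapted to orbifolds in Section~\ref{sec:orbifolds}) then yields (1): for all large $n$, $\mathcal{O}_n$ is obtained by orbifold Dehn filling along a slope on a single cusp of $\mathcal{O}$. For (2), the compatibility framework of Section~\ref{sec:compatible} extends to orbifolds: one transports the finitely many elements of the support of $\mu$ to $\Gamma_n$ via the near-isomorphism on balls of fixed large radius provided by Chabauty convergence, producing a finitely supported measure $\mu_n$ compatible with $\mu$ that is non-degenerate in $\Gamma_n$ for $n$ large.

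For part (3), I would use Ledrappier's singularity criterion: the hitting measure $\nu_{\mu_n}$ is singular with respect to Lebesgue on $\partial_\infty \HH^3 \cong \SSS^2$ if and only if
\[
h_{\mu_n} < 2\, \ell_{\mu_n},
\]
where $h_{\mu_n}$ and $\ell_{\mu_n}$ denote the asymptotic entropy and drift of the random walk on $\Gamma_n$ and $2$ is the volume entropy of $\HH^3$. By Randecker--Tiozzo~\cite{RT21}, singularity of $\nu_\mu$ holds on the non-uniform lattice $\Gamma$, so $h_\mu < 2\, \ell_\mu$. The compatibility of $\mu_n$ with $\mu$ under the convergence $\Gamma_n \to \Gamma$ is designed precisely to force
\[
h_{\mu_n} \to h_\mu \quad \text{and} \quad \ell_{\mu_n} \to \ell_\mu
\]
as $n \to \infty$; hence the strict inequality persists for $n$ sufficiently large, and $\nu_{\mu_n}$ is singular.

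The main obstacle I foresee is the continuity of the asymptotic entropy $h$ under compatible geometric convergence in the orbifold setting. Continuity of the drift $\ell$ is comparatively soft, following from convergence of the hyperbolic displacement lengths of the finitely many elements in the support of $\mu_n$ to those of $\mu$. Entropy continuity is more delicate, because the geometry of $\mathcal{O}_n$ degenerates on the deep part of the filled cusp of $\mathcal{O}$, and one must control the probability that the random walk on $\Gamma_n$ ventures deeply into this thin region; in the orbifold case, additional care is required near pillowcase cusps, where one must track the walk $\mathbb{Z}/2$-equivariantly when lifting to a local toral cover.
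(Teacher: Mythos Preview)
Your overall architecture for parts (1) and (2) is fine and tracks the paper's use of Dunbar--Meyerhoff from Section~\ref{sec:orbifolds}. But there is a genuine gap in your argument for part (3), and it lies exactly where you already suspect it: continuity of the asymptotic entropy $h_{\mu_n} \to h_\mu$. This is \emph{not} established anywhere in the paper; in fact, the paper explicitly poses it as an open question (Question~\ref{q:conv-entropy}). So your proposed proof, as written, rests on an unproven claim.

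The paper avoids this entirely by a different and much softer mechanism. The point you are missing is that ``compatible'' measures are not obtained by a geometric ``near-isomorphism on balls'' transport as you describe; they are defined as pushforwards $\mu_n = (\rho_n)_*\mu$ through the \emph{surjective group homomorphism} $\rho_n \colon \Gamma \twoheadrightarrow \Gamma_n = \Gamma/\langle\!\langle \lambda^m \rangle\!\rangle$ coming from orbifold Dehn filling (see Section~\ref{sec:compatible} and Remark~\ref{rem:Dehn-deformation}). Because $\Gamma_n$ is a quotient of $\Gamma$, Proposition~\ref{prop:entropy-inequality} gives the one-sided bound $h_{\mu_n} \le h_\mu$ directly, with no continuity needed. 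Then only the drift needs to converge, and for that the paper invokes Bocker-Neto--Viana~\cite{BNV17}; combined with $h_\mu < 2\ell_\mu$ on the non-uniform limit (Corollary~\ref{singularcorollary}), one gets $h_{\mu_n} \le h_\mu < 2\ell_\mu \leftarrow 2\ell_{\mu_n}$ for all large $n$. This is the ``last part of the proof of Theorem~\ref{th:main-Kleinian}'' the paper appeals to. Your equivariant-lift worries about pillowcase cusps and deep-thin excursions are therefore beside the point: the whole entropy-continuity issue evaporates once you use the quotient structure.
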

\begin{proof}
    Follows from Section \ref{sec:orbifolds}, including the hyperbolic Dehn filling theorem for orbifolds by Dunbar--Meyerhoff \cite{DM94}, Proposition \ref{prop:entropy-inequality}, and from the last part of the proof of Theorem \ref{th:main-Kleinian}.
\end{proof}

Here we provide a special but very natural version of our main theorem in the setting of hyperbolic reflection groups.

\begin{theorem}\label{th:main-Coxeter}
    Let $\Sigma$ be the Coxeter--Vinberg diagram on $m$ vertices of a compact Coxeter polyhedron $P \subset \HH^3$ with a dihedral angle $\pi/n$, $n \ge 6$, at one of its edges $e$. By varying this parameter $n$ with other dihedral angles remaining fixed, we obtain a sequence of compact Coxeter polyhedra $P_n \subset \HH^3$ whose corresponding cocompact reflection groups $\Gamma_n$ (generated by reflections $r^{(n)}_1, \ldots, r^{(n)}_m$ in the walls of $P_n$) satisfy the following conditions.
    \begin{enumerate}
        \item $\Gamma_n \to \Gamma$ geometrically, where $\Gamma$ is a cofinite group generated by reflections $r_1, \ldots, r_m$ in the walls of a $1$-cusped polyhedron $P_\infty$ obtained from $P_n$ by contracting the edge $e$ to a point $v_\infty$ on the ideal boundary $\partial \HH^3$.
        \item Assume that the measures $\mu_{n}$ are 
        defined by assigning probability $p_i >0$  to the generating reflection $r^{(n)}_i$, where $1 \le i \le m$ and the $p_i$ do not depend on $n$. Then there exists a number $N > 0$ such that for all $n>N$ the hitting measure $\nu_{\mu_n}$ of the finitely supported random walk  on $\Gamma_n$ with distribution $\mu_n$ is singular with respect to the Lebesgue measure on the ideal boundary $\partial_\infty \HH^3 \cong \mathbb{S}^2$.
    \end{enumerate}
\end{theorem}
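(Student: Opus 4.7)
The plan is to reduce Theorem~\ref{th:main-Coxeter} to the orbifold version Theorem~\ref{th:main-3-orbifold}. Three ingredients must be checked in order: (a) geometric convergence $\Gamma_n \to \Gamma$ in the Chabauty topology; (b) that $\Gamma$ is a non-uniform lattice whose unique cusp is (topologically) the pillowcase $S^2(2,2,2,2)$; and (c) that the measures $\mu_n$ are compatible with $\mu$ in the sense of Section~\ref{sec:compatible}. Given these, Theorem~\ref{th:main-3-orbifold} applied to $\Gamma$ and the family $\mu_n$ immediately yields the singularity conclusion.

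For (a), the key input is Andreev's theorem: a compact acute-angled hyperbolic polyhedron in $\HH^3$ is determined up to isometry by its combinatorial type and dihedral angles, and the isometry class depends continuously on those angles. Fixing the combinatorial type and all dihedral angles except the one along $e$ and letting $\pi/n \to 0$, the $P_n$ vary continuously in $\HH^3$; the edge $e$ shrinks and its two endpoints merge to a single ideal point $v_\infty \in \partial_\infty \HH^3$, giving $P_\infty$. Consequently the supporting hyperplanes of the walls of $P_n$ converge to those of $P_\infty$, and the reflections $r_i^{(n)}$ converge to $r_i$ in $\mathrm{PO}_{3,1}(\R)$, i.e., the representations converge algebraically. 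Combined with the orbifold hyperbolic Dehn filling machinery of \cite{DM94} recalled in Section~\ref{sec:alg-geom-conv}, this upgrades to Chabauty convergence $\Gamma_n \to \Gamma$ once $n$ is past the finite list of exceptional fillings; the hypothesis $n \ge 6$ is the standard safe threshold.

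For (b), recall that a compact hyperbolic Coxeter polyhedron in $\HH^3$ is \emph{simple}: the spherical link of each compact vertex is a Coxeter simplex (a triangle), so exactly three walls meet at every compact vertex. At each endpoint $v_j$ of $e$ in $P_n$, the three walls meeting there are $F_1, F_2$ (those containing $e$) together with one additional wall, say $F_3$ at $v_1$ and $F_4$ at $v_2$; since in a Coxeter polyhedron each edge is the intersection of exactly two walls, $F_3 \ne F_4$. After the collapse, the four distinct walls $F_1, F_2, F_3, F_4$ meet at $v_\infty$, and the horospherical cross-section is a Euclidean Coxeter quadrilateral whose interior angles are $\pi/a_1, \ldots, \pi/a_4$ with integers $a_j \ge 2$. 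The Euclidean angle-sum identity $\sum_j \pi/a_j = 2\pi$ together with $1/a_j \le 1/2$ forces $a_j = 2$ for every $j$: the link is a rectangle, so the cusp of $\mathcal{O} = \HH^3/\Gamma$ is the pillowcase $S^2(2,2,2,2)$, as required.

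Compatibility in (c) is immediate: the generators $r_i^{(n)}$ converge in $\mathrm{PO}_{3,1}(\R)$ to the generators $r_i$ of $\Gamma$, and the weights $p_i > 0$ are the same for every $n$, which is exactly the compatibility condition of Section~\ref{sec:compatible}. Since reflections in the walls of a fundamental polyhedron generate the reflection group, each $\mu_n$ and $\mu$ are non-degenerate. Theorem~\ref{th:main-3-orbifold} then produces the desired threshold $N$. The main obstacle I anticipate is step (a): ensuring that the one-parameter algebraic deformation obtained by varying a single dihedral angle really does realize the orbifold hyperbolic Dehn filling of $\Gamma$ and yields honest Chabauty convergence rather than merely algebraic convergence, and verifying that $n \ge 6$ suffices to avoid the exceptional fillings. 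Steps (b) and (c) are essentially combinatorial once (a) is in place.
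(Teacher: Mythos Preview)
Your strategy of reducing to Theorem~\ref{th:main-3-orbifold} has a technical mismatch: that theorem is stated for $\Gamma < \mathrm{PSL}_2(\CC)$, i.e.\ for orientation-preserving lattices whose quotient is an \emph{oriented} orbifold with cusp cross-section $\mathbb{T}^2$ or $S^2(2,2,2,2)$. A hyperbolic reflection group contains the orientation-reversing reflections $r_i$, so it does not sit inside $\mathrm{PSL}_2(\CC)$; the orbifold $\HH^3/\Gamma$ has mirror boundary and its cusp cross-section is the Euclidean \emph{rectangle} (a non-orientable $2$-orbifold), not the pillowcase. The pillowcase you identify in step~(b) is the cusp of the orientation double cover $\HH^3/\Gamma^+$, but the measures $\mu_n$ you wish to study are supported on the full reflection groups $\Gamma_n$, not on $\Gamma_n^+$, so passing to the double cover does not directly salvage the reduction.

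The paper's proof is much more direct and bypasses Theorem~\ref{th:main-3-orbifold} entirely. From the discussion of Dehn filling for Coxeter polyhedra it simply observes that if the edge $e$ is bounded by the walls carrying $r_1, r_2$, then $\Gamma_n$ is the quotient of $\Gamma$ by the normal closure of $(r_1 r_2)^n$. Proposition~\ref{prop:entropy-inequality} then gives $h_n \le h$, and the remainder is exactly the endgame of Theorem~\ref{th:main-Kleinian}: continuity of the drift yields $\ell_n \to \ell$, the non-uniform lattice $\Gamma$ satisfies $h < 2\ell$, hence $h_n \le h < 2\ell_n$ for all large $n$. No orbifold Dehn-filling theorem, no identification of the cusp type, and no appeal to Theorem~\ref{th:main-3-orbifold} are needed. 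Your steps~(a) and~(c) are essentially correct and already contain everything the direct argument requires; it is only the detour through Theorem~\ref{th:main-3-orbifold} that creates the orientability problem.
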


In the case of Fuchsian groups, the situation is different. In contrast to the Kleinian groups case, there are, in general, uncountably many deformations of lattices in $\Isom(\HH^2)$. Moreover, the algebraic and geometric convergence do not coincide even in the case of lattices in $\Isom(\HH^2)$. See Section \ref{sec:examples} for examples.

\begin{theorem}\label{th:main-Fuchsian}
Let $\Gamma$ be a non-uniform lattice in $\mathrm{PSL}_{2}(\R) = \mathrm{Isom}^+(\HH^2)$, i.e. a cofinite Fuchsian group, and $\mu$ a non-degenerate finitely supported measure on $\Gamma$. Then for every sequence of cocompact Fuchsian groups $\Gamma_n=\rho_n(\Gamma)$ algebraically converging to $\Gamma=\rho(\Gamma)$, there exists $N \in \N$ such that for every $n > N$, the hitting measure $\nu_n = \nu_{\mu_n}$ of $\mu_n := (\rho_n)_*\mu$ is singular with respect to the Lebesgue measure on $\partial_\infty \HH^2 \cong \mathbb{S}^1$.
\end{theorem}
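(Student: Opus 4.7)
The approach is to combine Kaimanovich's fundamental inequality with the known singularity of the hitting measure in the cofinite Fuchsian case, transferring singularity to the cocompact approximants via algebraic convergence. For any discrete subgroup of $\mathrm{PSL}_2(\R)$ acting on $\HH^2$ (volume entropy $v=1$), Kaimanovich's criterion for the hitting measure, or equivalently the dimension formula $\dim_H \nu_{\mu'} = h(\mu')/\ell(\mu')$, implies that $\nu_{\mu'}$ is singular with respect to Lebesgue on $\partial_\infty \HH^2 \cong \SSS^1$ as soon as $h(\mu') < \ell(\mu')$ strictly. Hence it suffices to show $h(\mu_n) < \ell(\mu_n)$ for all $n$ large.

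Two easy inputs set up the limit. First, since $\Gamma$ is a non-uniform lattice in $\mathrm{PSL}_2(\R)$, Guivarc'h--Le Jan \cite{GLJ93} (see also \cite{DKN09,blachere2011harmonic}) gives that $\nu_\mu$ is already singular, so $h(\mu) < \ell(\mu)$; set $\delta := \tfrac{1}{3}(\ell(\mu) - h(\mu)) > 0$. Second, since $\Gamma$ is cofinite but $\Gamma_n$ is cocompact, these groups are not abstractly isomorphic (a non-uniform Fuchsian group is virtually free, a cocompact one is not), and so $\rho_n\colon \Gamma \twoheadrightarrow \Gamma_n$ has non-trivial kernel. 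Shannon entropy is non-increasing under pushforward by a map, and $\mu_n^{*k} = (\rho_n)_* \mu^{*k}$, so
\[
h(\mu_n) \;\leq\; h(\mu) \quad \text{for every } n.
\]

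The technical heart of the argument is the convergence of drift $\ell(\mu_n) \to \ell(\mu)$. Upper semi-continuity is routine: writing $F_\rho(k) := \mathbb{E}_{\mu^{*k}}[d(o, \rho(w) o)]$, Fekete's lemma gives $\ell(\mu_n) \leq F_{\rho_n}(k)/k$, and since $\mathrm{supp}(\mu^{*k})$ is finite, pointwise convergence $\rho_n(g) \to \rho(g)$ passes to $F_{\rho_n}(k)/k \to F_\rho(k)/k$, yielding $\limsup_n \ell(\mu_n) \leq F_\rho(k)/k \to \ell(\mu)$. Lower semi-continuity is the genuinely hard part. The plan is to exploit that $\ker \rho_n$ is generated by cusp-filling relations -- words of parabolic type whose length in $\Gamma$ tends to infinity with $n$ -- and combine this with exponential deviation estimates for random walks on Gromov hyperbolic groups (Mathieu--Sisto, Gouëzel) to ensure that the distribution of $\rho_n(w_k)$ stays close to that of $\rho(w_k)$ up to a time horizon $T(n) \to \infty$. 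A diagonal argument across $n$ and $k$ then produces $\liminf_n \ell(\mu_n) \geq \ell(\mu)$.

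Once the drifts converge, pick $N$ large enough that $|\ell(\mu_n) - \ell(\mu)| < \delta$ for $n \geq N$ and chain the estimates:
\[
h(\mu_n) \;\leq\; h(\mu) \;=\; \ell(\mu) - 3\delta \;<\; \ell(\mu) - \delta \;<\; \ell(\mu_n),
\]
so $h(\mu_n) < \ell(\mu_n)$ and Kaimanovich's criterion / the dimension formula forces $\nu_{\mu_n}$ to be singular with respect to Lebesgue on $\SSS^1$. The principal obstacle is the lower semi-continuity of drift: because $\rho_n$ is forced to be non-faithful (in contrast to the three-dimensional Kleinian setting, where compatibility is engineered through hyperbolic Dehn filling), it cannot be extracted purely formally from algebraic convergence and requires genuine control over how the defining relations of $\ker \rho_n$ lengthen in the word metric on $\Gamma$.
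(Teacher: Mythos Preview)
Your overall architecture matches the paper's proof exactly: entropy monotonicity under the quotient $\rho_n$ gives $h(\mu_n)\le h(\mu)$, non-uniformity of $\Gamma$ gives the strict inequality $h(\mu)<\ell(\mu)$, and convergence of drift $\ell(\mu_n)\to\ell(\mu)$ then forces $h(\mu_n)<\ell(\mu_n)$ for large $n$, whence singularity by the $h=\ell v$ criterion.

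The substantive divergence is in how you handle drift continuity. The paper does not argue this from scratch: it simply invokes Bocker-Neto--Viana \cite{BNV17}, which proves that Lyapunov exponents of random products of $2\times 2$ matrices depend continuously on the driving measure (in the weak-$*$ topology with a moment condition). Since the drift in $\HH^2$ is twice the top Lyapunov exponent of the $\mathrm{SL}_2(\R)$-cocycle, and algebraic convergence $\rho_n\to\rho$ together with finite support of $\mu$ gives weak-$*$ convergence of the pushforward measures on $\mathrm{SL}_2(\R)$, the convergence $\ell(\mu_n)\to\ell(\mu)$ follows in one line. Your proposed route through deviation estimates and a diagonal argument is therefore unnecessary.

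More to the point, your sketch for lower semi-continuity has a gap. You assert that $\ker\rho_n$ is ``generated by cusp-filling relations --- words of parabolic type whose length in $\Gamma$ tends to infinity with $n$'', but the theorem hypothesizes only algebraic convergence of representations, with no structural control on the kernels. Nothing in the statement forces $\ker\rho_n$ to be normally generated by peripheral elements, nor forces the shortest nontrivial element of $\ker\rho_n$ to have word length tending to infinity. (Contrast the Kleinian setting, where hyperbolic Dehn filling supplies exactly this structure.) So the Mathieu--Sisto/Gou\"ezel input you want to feed into has no foothold. The Bocker-Neto--Viana result sidesteps this entirely by working directly at the level of matrix coefficients.
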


We also note that there exist many small deformations of cocompact Fuchsian groups that are also cocompact Fuchsian. This allows one to extend Theorem \ref{th:main-Fuchsian} to uniform lattices in $\mathrm{PSL}_{2}(\R)$ that algebraically approximate those for which this theorem is proven, and one can thus continue adding more and more groups that satisfy the singularity conjecture. 

\subsection*{Funding}  N.B. was supported by NSERC Discovery Grant RGPIN-2024-05680. P.K.'s research is supported by the Natural Sciences and Engineering Research Council of Canada (NSERC). G.T. was supported by NSERC Discovery Grant RGPIN-2024-04324.

\subsection*{Acknowledgements} We thank Nathan Dunfield, David Futer, Misha Kapovich, and Jean Raimbault for very fruitful discussions on Dehn surgery and geometric convergence. We are grateful to Misha Kapovich for suggesting the proof outline for Theorem \ref{th:geom-alg-conv}, part (1), and to Fedor Vylegzhanin for his help with algebraic topology.

\section{Preliminaries}\label{sec:prelim}

\subsection{Hyperbolic manifolds and orbifolds}
The  $d$-dimensional {\em hyperbolic space} $\HH^d$ is the unique connected, simply connected Riemannian manifold of constant negative curvature. It has several useful models, among which we are mostly interested in the {\em hyperboloid model} $\mathcal{H}^d$ and the {\em upper half-space model} $\mathcal{U}^d$. 

To describe the hyperboloid model, we consider the Minkowski space $\R^{d,1}$, i.e. $\R^{d+1}$ equipped with a Lorentzian scalar product $(\cdot, \cdot)$ of signature $(d,1)$. Passing to the standard basis of $\R^{d,1}$, we can write
$(x,y)=-x_0 y_0 + x_1 y_1 + \ldots + x_d y_d$. 
Then we set
$$
\mathcal{H}^d := \{x \in \R^{d,1}\mid (x,x)=-1, x_0 > 0\},
$$
which is a connected component of the hyperboloid $\{x \in \R^{d,1} \mid(x,x)=-1\}$. The Riemannian metric tensor $g_{\mathcal H}$ is defined as the restriction of the Lorentzian scalar product $(\cdot, \cdot)$ on the tangent bundle $T \mathcal{H}^d$.
The hyperbolic distance function $\rho_{\mathcal{H}}$ then satisfies $\cosh \rho_{\mathcal{H}}(x,y)=-(x,y)$. The full isometry group $\Isom(\HH^d)$ is isomorphic to the index-$2$ subgroup $\mathrm{O}'_{d,1}(\R)$ of $\mathrm{O}_{d,1}(\R)$ defined by the condition that every matrix $A \in \mathrm{O}'_{d,1}(\R)$ leaves $\mathcal{H}^d$ invariant. This group can also be identified with the projective orthogonal group $\mathrm{PO}_{d,1}(\R) = \mathrm{O}_{d,1}(\R)/\{\pm \mathtt{id}\}$. Totally geodesic lines (geodesics) and planes are the intersections of $\mathcal{H}^d$ with vector subspaces of $\R^{d,1}$. Hyperbolic reflections in hyperplanes are linear reflections with respect to hyperplanes of $\R^{d,1}$.

The upper half-space model $\mathcal{U}^d$ is defined as follows:
$$
\mathcal{U}^d = \{x=(x_1,\ldots,x_d) \in \R^d \mid x_d > 0\} \text{ with a metric tensor } g_{\mathcal{U}} = \frac{g_\mathcal{E}}{x_d^2}, 
$$
where $g_{\mathcal{E}}$ is the standard Euclidean metric on $\R^d$. Totally geodesic planes (including one-dimensional lines) are, in this model, either vertical Euclidean half-planes or hemispheres orthogonal to the boundary hyperplane $\{x\in \R^d \mid x_d = 0\}$.

In dimensions $2$ and $3$, it is useful to involve the complex plane in this model: 
$$
\mathcal{U}^2 = \{z \in \CC\mid \mathtt{Im}(z)>0\} \text{ and } \mathcal{U}^3 = \{(z,t) \in \CC\times \R_{>0}\}.$$ The orientation-preserving isometry groups of $\HH^2$ and $\HH^3$ are then identified with the groups of fractional linear (Möbius) automorphisms of $\mathcal{U}^2$ and $\CC$, respectively: 
$\Isom^+(\HH^2) \cong \mathrm{PSL}_2(\R)$ and $\Isom^+(\HH^3) \cong \mathrm{PSL}_2(\CC)$.

Any complete connected {\em hyperbolic $d$-manifold} is a quotient $M=\HH^d/\Gamma$, where $\Gamma < \Isom(\HH^d)$ is a torsion-free discrete subgroup. If $\Gamma$ has finite-order elements, then $M$ is called a {\em hyperbolic orbifold} and $\Gamma$ its {\em orbifold fundamental group}. The manifold (or orbifold) $M$ has finite volume if and only if $\Gamma$ is a lattice in the real Lie group $\Isom(\HH^d)$, i.e. $\Gamma$ has finite-volume fundamental domain in $\HH^d$. The orbifold $M$ is closed if $\Gamma$ is a cocompact (or uniform), i.e. if its fundamental domain is compact in $\HH^d$.

\subsection{Thick-thin decomposition}

First, we will need some fundamental notions from Riemannian geometry.

It is well known that in any Riemannian manifold $(M,g)$, given a point $p \in M$ and a tangent vector $T_p M$, there is a unique maximal geodesic $\gamma_v \colon I \to M$, where $I \subset \R$ is some interval containing $0$, such that $\gamma(0)=p$ and $\gamma'(0)=v$. This allows us to define the {\em exponential map} from the tangent bundle $TM$ to $M$ as follows. 

\begin{definition}
Let $\mathcal{E} = \{v \in TM \mid \gamma_v \text{ is defined on } I \supset [0,1]\}$. Then the exponential map $\exp : \mathcal{E} \to M$ is defined by $\exp(v) = \gamma_v(1).$
\end{definition}

The exponential map is smooth, its differential at the origin of each $T_p M$ is the identity map, whence $\exp_p \colon \mathcal{E}_p \to M$, where $\mathcal{E}_p = \mathcal{E}\cap T_p M$, is a local diffeomorphism at the origin of $T_p M$. Now we can define the {\em injectivity radius}.  

\begin{definition}
    Let $(M,g)$ be a Riemannian manifold. The injectivity radius $\mathrm{inj}_p M$ of $M$ at a point $p \in M$ is the supremum of all $r > 0$ such that $B(0,r) \subset \mathcal{E}_p$ and $\exp_p\mid_{B(0,r)}$ is a diffeomorphism onto its image.
\end{definition}

Let us now return to the setting of complete hyperbolic manifolds $M=\HH^d/\Gamma$, possibly of infinite volume. Since $\Gamma$ acts properly discontinuously by isometries on the universal cover $\HH^d$ of $M$, one can reach the minimal distance between the preimages of $p \in M$ in $\HH^d$: this minimal distance $D$ is equal to the length of the shortest homotopically non-trivial loop at $p$. Moreover, $\mathrm{inj}_p M = D/2$. More precisely, denoting by $\mathrm{disp}(\gamma)$ the minimal displacement of $\gamma \in \Gamma$, we have
$$
\mathrm{inj}_p M = \frac{1}{2}\inf\{\mathrm{disp}(\gamma)\mid \gamma \in \Gamma - \{e\}\}.
$$
Therefore, the injectivity radius of $p \in M$ along a closed simple geodesic of length $\varepsilon$ is precisely $\varepsilon/2$.

The following group-theoretic result, known as the Margulis Lemma, has broad applications in geometry and topology of hyperbolic manifolds.

\begin{lemma}[Margulis Lemma]\label{lem:Margulis}
    For every dimension $d \geq 2$, there is a constant $\varepsilon_d > 0$ such that for all $x \in \HH^d$, every discrete group $\Gamma < \Isom(\HH^d)$ generated by elements that move $x$ at distance $<\varepsilon_d$ is virtually abelian.
\end{lemma}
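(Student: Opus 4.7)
The plan is to combine a Zassenhaus-type commutator-contraction argument with the structure theory of discrete isometry subgroups of $\HH^d$. The overall structure has two stages: (i) show that $\Gamma$ is virtually nilpotent, and (ii) upgrade virtual nilpotency to virtual abelianness via the classification of hyperbolic isometries.

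First I would invoke the \emph{Zassenhaus Lemma} for Lie groups: in any connected Lie group $G$ there is a neighborhood $W$ of the identity such that, for every discrete subgroup $\Gamma < G$, the subgroup $\langle \Gamma \cap W\rangle$ is nilpotent. This follows from the Baker--Campbell--Hausdorff estimate $\|[\exp X,\exp Y]\|_{\mathfrak g}\le C\|X\|\,\|Y\|$ on small elements: iterated commutators of elements of $\Gamma\cap W$ contract super-exponentially, so by discreteness they must eventually equal the identity, forcing the lower central series to terminate. Apply this to $G=\mathrm{PO}_{d,1}(\R)$.

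The subtle step is converting the hypothesis ``$d(\gamma x,x)<\varepsilon$'' into ``$\gamma\in W$''. The set $U_\varepsilon=\{g\in G:d(gx,x)<\varepsilon\}$ is a neighborhood of the compact stabilizer $K=\mathrm{Stab}(x)\cong\mathrm{O}(d)$, not of $e$. Set $\Gamma_\varepsilon:=\Gamma\cap U_\varepsilon$; by discreteness and compactness of the closed $\varepsilon$-ball, $\Gamma_\varepsilon$ is finite. Using the Cartan decomposition $G=K\exp(\mathfrak p)$, write each $\gamma\in\Gamma_\varepsilon$ as $\gamma=k_\gamma\exp(\xi_\gamma)$ with $k_\gamma\in K$ and $\|\xi_\gamma\|\le C\varepsilon$. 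The rotational parts $k_\gamma$ are not individually small, but the structural relations $[\mathfrak k,\mathfrak p]\subset\mathfrak p$ and $[\mathfrak p,\mathfrak p]\subset\mathfrak k$ let one show that, after passing to a finite-index subgroup of $\langle\Gamma_\varepsilon\rangle$, the new generators do lie in $W$, provided $\varepsilon<\varepsilon_d$ is chosen sufficiently small. Hence $\langle\Gamma_\varepsilon\rangle$ is virtually nilpotent.

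Finally, I would show that any discrete nilpotent subgroup $H<\Isom(\HH^d)$ is virtually abelian via a trichotomy on isometry types. If every element of $H$ is elliptic with a common fixed point, then $H$ lies in a compact orthogonal stabilizer and is finite. If $H$ contains a parabolic element with fixed point $\xi\in\partial\HH^d$, nilpotency forces all of $H$ to fix $\xi$ and preserve the horospheres based there; the induced action on a horosphere is by Euclidean isometries of $\R^{d-1}$, and Bieberbach's theorems give virtual abelianness. If $H$ contains a hyperbolic element with axis $\ell$, nilpotency similarly forces preservation of $\ell$, so $H$ is a finite extension of an infinite cyclic translation group along $\ell$. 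The main obstacle is Step~2: one must control rotational parts in $K$, which a priori can be arbitrary, and use discreteness to confine them to a finite subgroup after passing to finite index, carefully balancing the displacement estimates in $\mathfrak p$ against the rotation estimates in $\mathfrak k$ so as to extract a dimension-only constant $\varepsilon_d$ that works uniformly. This balancing is the technical heart of the Kazhdan--Margulis theorem.
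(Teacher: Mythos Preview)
The paper does not prove the Margulis Lemma; it is quoted as a standard background fact with no argument and no reference attached, so there is no ``paper's own proof'' to compare against.

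On the merits of your sketch: the two-stage plan (Zassenhaus-type contraction to obtain virtual nilpotency, then the elliptic/parabolic/hyperbolic trichotomy to upgrade to virtual abelianness) is the standard route, and Stage~(ii) is correct as written. In Stage~(i) you correctly isolate the real difficulty, namely that $U_\varepsilon=\{g:d(gx,x)<\varepsilon\}$ is a neighborhood of the compact stabilizer $K$ rather than of $e$, so the Zassenhaus neighborhood $W$ does not apply directly. Your proposed resolution, however, is not how the standard proofs run: one does not first pass to a finite-index subgroup whose generators already lie in $W$ (there is no obvious mechanism producing such generators). Instead one proves a commutator estimate valid directly on $U_\varepsilon$: for $g,h\in U_\varepsilon$ one has $d([g,h]x,x)\le C\varepsilon\cdot d(hx,x)$ together with $r_{[g,h]}=[r_g,r_h]+O(\varepsilon)$ for the rotational parts, so that after one layer of commutators the elements genuinely sit in a Zassenhaus neighborhood of $e$ in $G$. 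The finite-index passage enters only afterwards, via Jordan's theorem applied to the resulting finite rotational data in $K\cong O(d)$; this is where the dimension-only index bound comes from. So your outline names the right obstacle and the right ingredients, but the order of operations in Step~2 should be rearranged.
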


\begin{definition}
The constant $\varepsilon_d>0$ from Lemma \ref{lem:Margulis} is called the Margulis constant.   
\end{definition}
 
\begin{definition}[Thick-Thin Decomposition]\label{def:thick-thin}
Fix $d\ge 2$.  For any complete hyperbolic $d$–manifold $M$ and any $\varepsilon>0$, we define the $\varepsilon$–thick part
$$
M^{\ge \varepsilon} = \{p\in M \mid \inj_p(M) \ge \varepsilon/2\}
$$
and the $\varepsilon$–thin part
$$
M^{\le \varepsilon} = \mathrm{clos}(M - M^{\ge \varepsilon}).
$$
\end{definition}

Note that if there is a closed geodesic $\alpha$ of length precisely $\varepsilon$, then $\alpha \subset M^{\ge \varepsilon}$ but $\alpha \not\subset M^{\le \varepsilon}$.

The central result on the thick-thin decomposition states (see \cite[Theorem 4.2.14]{Mar}) that if $0<\varepsilon \le \varepsilon_d$ (the Margulis constant), then the thin part $M^{\le \varepsilon}$ of any complete hyperbolic $d$-manifold $M$ is a disjoint union of star-shaped neighbourhoods of cusps and of simple closed geodesics of length $< \varepsilon$. In fact, all closed geodesics of length $< \varepsilon_d$ are simple and disjoint. If $d \le 3$ and $M$ is orientable, then the thin part $M^{\le \varepsilon}$ consists of truncated cusps and $R$-tubes; see \cite[Proposition 4.2.15]{Mar}. 

The following fundamental fact (see \cite[Proposition 4.2.17 and Corollary 4.2.18]{Mar} or \cite[Proposition D.2.6 and Corollary D.3.14]{BP-lectures}) about the topology of finite-volume hyperbolic manifolds plays an important role in this paper.

\begin{proposition}\label{prop:thick-thin-finite-volume}
    Let $0<\varepsilon \le \varepsilon_d$. A complete hyperbolic $d$-manifold $M$ has finite volume if and only if its thick part $M^{\ge \varepsilon}$ is compact. Moreover, every complete finite-volume hyperbolic manifold $M$ is diffeomorphic to the interior of a compact manifold with boundary that consists of closed flat manifolds. In particular, any orientable hyperbolic $3$-manifold of finite volume can be identified with a compact $3$-manifold with boundary consisting of (finitely many) flat tori.
\end{proposition}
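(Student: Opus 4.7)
The plan is to prove the two directions of the equivalence and the structure statement by combining the Margulis Lemma with a volume packing argument and a case analysis of the components of the thin part.

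For the ``if'' direction, assume $M^{\ge \varepsilon}$ is compact. By the thick-thin decomposition theorem cited above, $M^{\le \varepsilon}$ is a disjoint union of star-shaped neighbourhoods of cusps and of short closed geodesics. Only finitely many such components can meet the compact thick part, so there are finitely many components in total. Each tube component is compact (hence of finite volume), and each cusp component is isometric to a quotient of a horoball by a virtually abelian parabolic group and has finite volume because the induced volume form decays exponentially along the cusp direction. Hence $\vol(M) < \infty$.

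For the converse, suppose $M$ has finite volume but $M^{\ge \varepsilon}$ is noncompact; I would extract a sequence $\{p_n\} \subset M^{\ge \varepsilon}$ with no convergent subsequence. Passing to a subsequence, one may arrange that $\rho_\HH(p_n,p_m) \ge \varepsilon/2$ for $n \neq m$. Since $\inj_{p_n} M \ge \varepsilon/2$, the balls $B(p_n,\varepsilon/4)$ are embedded and pairwise disjoint, and each has hyperbolic volume equal to the fixed constant $V_d(\varepsilon/4) > 0$. This contradicts $\vol(M) < \infty$, so $M^{\ge \varepsilon}$ must be compact.

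For the structure statement, fix $\varepsilon < \varepsilon_d$. By the first part, $M^{\ge \varepsilon}$ is compact and $M^{\le \varepsilon}$ has finitely many components. Each tube component is a compact submanifold with smooth boundary, so it can be absorbed into a compact manifold-with-boundary. For each cusp component $C$, the Margulis Lemma and the classification of parabolic subgroups in $\Isom(\HH^d)$ identify $C$ with a quotient $B/\Gamma_\infty$ of a horoball by a virtually abelian group of parabolics; a Bieberbach-type argument shows the horospherical cross-section is a closed flat $(d-1)$-manifold $N$, and $C$ is diffeomorphic to $N \times [0,\infty)$. Cutting off each cusp at some horospherical level yields the desired compact manifold-with-boundary whose interior is diffeomorphic to $M$, with boundary the disjoint union of the flat cross-sections. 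In dimension $d=3$ with $M$ orientable, each $N$ is a closed, flat, orientable surface, hence a torus $\mathbb{T}^2$.

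The main delicate point is the identification of cusp neighbourhoods with horoball quotients and the verification that their cross-sections are closed flat manifolds; this rests on the Margulis Lemma and Bieberbach's theorems on virtually abelian discrete groups of Euclidean isometries, both of which are available by standard references and are not re-proved here.
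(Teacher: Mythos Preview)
The paper does not actually prove this proposition: it is quoted as a standard fact with references to Martelli \cite[Proposition 4.2.17 and Corollary 4.2.18]{Mar} and Benedetti--Petronio \cite[Proposition D.2.6 and Corollary D.3.14]{BP-lectures}. Your sketch is essentially the argument one finds in those sources --- packing embedded balls in the thick part for one direction, and analysing the finitely many thin components (tubes and horoball quotients) for the other --- so there is no meaningful divergence to discuss.

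One small imprecision: in the converse direction you assert that a non-compact $M^{\ge\varepsilon}$ contains a sequence with pairwise distances $\ge \varepsilon/2$. Failure of total boundedness only guarantees an infinite $\delta$-separated set for \emph{some} $\delta>0$, not for the specific value $\varepsilon/2$. This is harmless --- take balls of radius $\min(\delta/2,\varepsilon/4)$ instead of $\varepsilon/4$ and the packing argument goes through unchanged --- but as written the step is not quite justified. Similarly, in the ``if'' direction it is worth making explicit that the cusp cross-sections are compact \emph{because} they lie in the (compact) boundary of $M^{\ge\varepsilon}$; only then does the exponential decay of the volume form yield finite volume for each cusp component.
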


\begin{remark}\label{rem:thin-small}
It is known (see \cite[Proposition 4.3.1]{Mar}) that, given a finite-volume hyperbolic manifold $M$ and any number $L>0$, there are only finitely many closed geodesics in $M$ shorter than $L$. This allows us to choose $\varepsilon$ to be small enough (namely, smaller than half the systole of $M$) so that the thin part $M^{\le \varepsilon}$ consists only of the cusp neighbourhoods and the thick part $M^{\ge \varepsilon}$ is a compact manifold with finitely many flat boundary components, as shows Proposition \ref{prop:thick-thin-finite-volume}. Such a thick part $M^{\ge \varepsilon}$ is homotopy equivalent to $M$ and is thus an aspherical manifold.
\end{remark}

\subsection{Deformations, convergence, and hyperbolic Dehn filling}\label{sec:def}

\subsubsection{Deformations and convergence: general theory}

Let $G$ be a (connected) Lie group, and $\Gamma$ an abstract group. Denote by $\mathrm{Hom} (\Gamma, G)$ the set of homomorphisms of the group $\Gamma$ to the group $G$ with the pointwise convergence topology, that is, a sequence $\rho_n \in \mathrm{Hom} (\Gamma, G)$ is said to {\em algebraically converge} to a homomorphism $\rho$ if $\lim_{n \to \infty} \rho_n(\gamma) = \rho(\gamma)$ for every $\gamma \in \Gamma$.

Let $\Gamma_n$ be a sequence of discrete subgroups of $\Isom(\HH^d)$. We say that $\Gamma_n$ \emph{converge in the Chabauty topology} to $\Gamma$ if the groups converge in the Chabauty topology on closed subsets of $G$. Convergence in this topology can be characterized as follows:
\begin{itemize}
  \item every $\gamma \in \Gamma$ is the limit of some sequence $\{\gamma_n\}$ with $\gamma_n \in \Gamma_n$;
  \item if $\gamma_n \to \gamma$ is a convergent sequence with $\gamma_n \in \Gamma_n$, then $\gamma \in \Gamma$.
\end{itemize}

Let $\mathscr{H}_d$ denote the set of complete hyperbolic manifolds $\HH^d/\Gamma$, with $\Gamma < \Isom(\HH^d)$ a discrete subgroup, and $\mathscr{F}_d \subset \mathscr{H}_d$ the subset of finite-volume manifolds (i.e. with $\Gamma$ being a lattice). We equip $\mathscr{H}_d$ with the topology of {\em geometric convergence} defined as the Chabauty topology on $\Isom(\HH^d)$. That is, hyperbolic manifolds $M_n=\HH^d/\Gamma_n$ converge to $M = \HH^d/\Gamma$ geometrically if $\Gamma_n \to \Gamma$ in the Chabauty topology.

We will need another notion of convergence which happens to be equivalent to the geometric one in the setting of hyperbolic manifolds. For a manifold $X$, let $\mathcal{OF}(X)$ denote the orthogonal frame bundle of $X$. Let $(M_1,v_1)$ and $(M_2,v_2)$ be complete hyperbolic $d$–manifolds with orthonormal baseframes $v_1 \in \mathcal{OF}(M_1)$, $v_2 \in \mathcal{OF}(M_2)$ based at points $x_1\in M_1$, $x_2\in M_2$. 
We say that a map $f:(X_1,v_1)\to (X_2,v_2)$ is a \emph{framed $(K,r)$–approximate isometry} between $(M_1,v_1)$ and $(M_2,v_2)$ if:
\begin{itemize}
  \item $f:X_1\to X_2$ is a diffeomorphism of domains $X_j\subset M_j$;
  \item $B_{M_1}(x_1,r)\subset (X_1,x_1)\subset (M_1,x_1)$ and 
        $B_{M_2}(x_2,r)\subset (X_2,x_2)\subset (M_2,x_2)$;
  \item $Df(v_1)=v_2$ (the baseframe is preserved);
  \item for all $x,y\in X_1$,
  $$
     \frac{1}{K}\, d_{M_1}(x,y) \;\le\; d_{M_2}\bigl(f(x),f(y)\bigr) \;\le\; K\, d_{M_1}(x,y).
  $$
\end{itemize}

Suppose now that $M_i=\HH^d/\Gamma_i$ is a sequence of hyperbolic manifolds with baseframes $v_i \in \mathcal{OF}(M_i)$ based at points $x_i \in M_i$, and let $M=\HH^d/\Gamma$ be a hyperbolic manifold with baseframe $v \in \mathcal{OF}(M)$ based at $x\in M$. The sequence $(M_i, v_i)$ converges to $(M,v)$ in the induced {\em refined Gromov--Hausdorff topology} if if there exist $(K_i,r_i)$–approximate isometries $f_i:(M_i,v_i)\to (M,v)$ with $K_i\to 1$ and $r_i\to\infty$ as $i\to\infty$.

The following theorem (see Benedetti--Petronio \cite[Theorem E.1.13 and Remark E.1.19]{BP-lectures} and Canary--Epstein--Green \cite[Theorem 3.2.9 and Corollary 3.2.11]{CEG87}) shows that the geometric convergence is equivalent to the classical pointed Gromov--Hausdorff convergence and the stronger convergence in the refined Gromov--Haus\-dorff topology, as well as to the pointed Gromov--Hausdorff convergence of the corresponding Dirichlet fundamental polyhedra.

\begin{theorem}\label{th:equivalent-convergences}
    Let $d \ge 2$. The following are equivalent.
    \begin{enumerate}
        \item A sequence of torsion-free discrete subgroups $\Gamma_n < \Isom(\HH^d)$ geometrically converges to a torsion-free discrete subgroup $\Gamma < \Isom(\HH^d)$.
        \item The pointed hyperbolic manifolds $(\HH^d/\Gamma_n, v_n)$ converge to $(\HH^d/\Gamma, v)$ in the refined Gromov--Hausdorff topology for some choice of orthonormal baseframes $v_n \in \mathcal{OF}(\HH^d/\Gamma_n)$ and $v \in \mathcal{OF}(\HH^d/\Gamma)$.
        \item For a fixed origin $o \in \HH^d$, the Dirichlet fundamental polyhedra $D_{\Gamma_n}(o)$ converge in the pointed Gromov--Hausdorff topology to the Dirichlet fundamental polyhedron $D_{\Gamma}(o)$. The corresponding side-pairing generators of $\Gamma_n$ also converge to that of $\Gamma$.
    \end{enumerate}
\end{theorem}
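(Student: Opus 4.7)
The plan is to prove the equivalences $(1)\Leftrightarrow(2)$ and $(1)\Leftrightarrow(3)$ separately, via standard Chabauty-topology arguments. Fix an origin $o \in \HH^d$ and a lifted baseframe $\tilde v \in \mathcal{OF}(\HH^d)$ projecting to $v$ under $\HH^d \to \HH^d/\Gamma$; after conjugating each $\Gamma_n$ by a suitable isometry of $\HH^d$, we may assume that fixed lifts $\tilde v_n$ of $v_n$ coincide with $\tilde v$ for every $n$.

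For $(1)\Rightarrow(2)$, given $R>0$ the set $S_R := \{\gamma \in \Gamma : d(o,\gamma o) \le 2R\}$ is finite. Chabauty convergence produces $\gamma_n \in \Gamma_n$ with $\gamma_n \to \gamma$ for each $\gamma \in S_R$, and for $n$ large these are essentially all the elements of $\Gamma_n$ moving $o$ by at most $2R$. One then constructs a framed $(K_n,R)$-approximate isometry $f_n : B_{M_n}(x_n,R) \to B_M(x,R)$ by lifting $B_{M_n}(x_n,R)$ to the Dirichlet region of $\Gamma_n$ centered at $o$ and pushing forward via $\HH^d \to \HH^d/\Gamma$, with distortion controlled by $\max_{\gamma\in S_R} d(\gamma_n o, \gamma o) \to 0$. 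A diagonal extraction as $R \to \infty$ then yields $K_n \to 1$ and $r_n \to \infty$.

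For $(2)\Rightarrow(1)$, lift framed $(K_n,r_n)$-approximate isometries $f_n$ to bilipschitz maps $\tilde f_n$ on balls in $\HH^d$ fixing the baseframe $\tilde v$. For each $\gamma \in \Gamma$, the composition $\tilde f_n^{-1} \circ \gamma \circ \tilde f_n$ is bilipschitz with constant approaching $1$ and differs by a vanishingly small amount from a unique isometry $\gamma_n \in \Isom(\HH^d)$, uniqueness being a consequence of the fact that an isometry of $\HH^d$ is determined by its action on a single frame. By construction $\gamma_n$ is a deck transformation on a large domain, hence $\gamma_n \in \Gamma_n$, and $\gamma_n \to \gamma$. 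A symmetric Arzelà--Ascoli argument sends convergent sequences $\gamma_n \in \Gamma_n$ to limits in $\Gamma$, verifying both Chabauty axioms.

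For $(1)\Leftrightarrow(3)$, write $D_\Gamma(o) = \bigcap_{\gamma \in \Gamma\setminus\{e\}} H_\gamma(o)$ with $H_\gamma(o) = \{x : d(x,o) \le d(x,\gamma o)\}$. Chabauty convergence on any compact $K \subset \HH^d$ reduces to convergence of the finitely many bisecting half-spaces $H_\gamma$ that actually cut $K$, which in turn is equivalent to pointed Gromov--Hausdorff convergence of the Dirichlet polyhedra together with convergence of the associated side-pairing generators. The main obstacle I anticipate is the lifting step in $(2)\Rightarrow(1)$: recognising that nearly-isometric framed maps of $\HH^d$ force actual convergence of group elements in $\Isom(\HH^d)$. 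The frame-preserving hypothesis together with torsion-freeness and the simple transitivity of $\Isom(\HH^d)$ on $\mathcal{OF}(\HH^d)$ are what make this rigidity argument work.
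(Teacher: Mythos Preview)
The paper does not give its own proof of this theorem: it states the result and cites Benedetti--Petronio \cite[Theorem E.1.13 and Remark E.1.19]{BP-lectures} and Canary--Epstein--Green \cite[Theorem 3.2.9 and Corollary 3.2.11]{CEG87} for the details. Your sketch is precisely the standard Chabauty-topology argument carried out in those references, so there is nothing to compare at the level of strategy.

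A couple of small comments on execution. In $(1)\Rightarrow(2)$, the map you describe --- lift a ball in $M_n$ into $D_{\Gamma_n}(o)$ and then project to $M$ --- is not literally well defined on the overlaps at the boundary of the Dirichlet domain; what one actually does (as in \cite{BP-lectures}) is build the approximate isometry on the frame bundles via the identifications $\mathcal{OF}(M_n)\cong \Gamma_n\backslash\Isom(\HH^d)$, where the Chabauty convergence of $\Gamma_n$ controls the quotient directly. In $(2)\Rightarrow(1)$, the phrase ``differs by a vanishingly small amount from a unique isometry'' hides the real work: one needs to know that $K$-bilipschitz self-maps of large balls in $\HH^d$ fixing a frame are $C^0$-close to the identity as $K\to 1$, which is where the curvature rigidity of $\HH^d$ enters. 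Both points are handled in the cited sources, so your outline is correct but would need those references to be made rigorous.
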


\begin{corollary}\label{cor:phi-n}
    If $\Gamma_n \to \Gamma$ geometrically, then for any compact subset $K \subset M=\HH^d/\Gamma$ there is $N>0$ such that for any $n>N$ there exists a $(1+\varepsilon_n)$-bi-Lipschitz embedding $\phi_n : K \hookrightarrow M_n=\HH^d/\Gamma_n$, which is a diffeomorphism onto its image $\phi_n(K)$.
\end{corollary}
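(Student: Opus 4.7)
The plan is to deduce this corollary directly from the equivalence between geometric convergence and refined Gromov--Hausdorff convergence supplied by Theorem \ref{th:equivalent-convergences}, by inverting the approximate isometries it provides and restricting them to $K$. The main task is organizational: identify the right basepoint, choose $N$ large enough that $K$ lies inside the ball on which the approximate isometry is defined, and check that the bi-Lipschitz constants of the inverse tend to $1$.

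In more detail, first I would invoke implication (1)$\Rightarrow$(2) of Theorem \ref{th:equivalent-convergences} to produce orthonormal baseframes $v \in \mathcal{OF}(M)$ and $v_n \in \mathcal{OF}(M_n)$, together with $(K_n, r_n)$-approximate isometries $f_n : (X_n, v_n) \to (Y_n, v)$, where $X_n \subset M_n$, $Y_n \subset M$, $K_n \to 1$ and $r_n \to \infty$. By the definition of an approximate isometry, $f_n$ is a diffeomorphism of $X_n$ onto $Y_n$, and $Y_n$ contains the ball $B_M(x, r_n)$ centered at the basepoint $x$ of $v$. Since $K \subset M$ is compact, there exists $R > 0$ with $K \subset B_M(x, R)$. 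Choose $N$ so that $r_n > R$ for all $n > N$; then $K \subset B_M(x, r_n) \subset Y_n$ for such $n$.

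For each $n > N$, set $\phi_n := f_n^{-1}\big|_K : K \to M_n$. Because $f_n : X_n \to Y_n$ is a diffeomorphism, so is $f_n^{-1} : Y_n \to X_n$, and its restriction to the compact subset $K$ is a smooth embedding that is a diffeomorphism onto its image $\phi_n(K) \subset X_n \subset M_n$. The $K_n$-bi-Lipschitz estimate
\[
    \frac{1}{K_n}\, d_{M_n}(y,y') \le d_M\bigl(f_n(y), f_n(y')\bigr) \le K_n\, d_{M_n}(y,y')
\]
for $y,y' \in X_n$ immediately transfers to the inverse, giving that $\phi_n$ is $K_n$-bi-Lipschitz on $K$. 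Setting $\varepsilon_n := K_n - 1$, we have $\varepsilon_n \to 0$, and the maps $\phi_n$ are $(1+\varepsilon_n)$-bi-Lipschitz embeddings of $K$ into $M_n$, as required.

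There is essentially no substantive obstacle: everything nontrivial is contained in Theorem \ref{th:equivalent-convergences}, and the only thing to be slightly careful about is that the basepoint $x$ yielded by the theorem may not be canonical, but compactness of $K$ makes this irrelevant since $K$ fits inside some ball around any fixed point of $M$.
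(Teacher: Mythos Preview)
Your proposal is correct and follows essentially the same approach as the paper: both invoke Theorem~\ref{th:equivalent-convergences} to obtain $(K_n,r_n)$-approximate isometries $f_n:(M_n,v_n)\to(M,v)$, fit the compact set $K$ inside a ball of some fixed radius $R$ about the basepoint, and take $n$ large enough that $r_n>R$. Your version is in fact slightly more explicit than the paper's (you spell out the inversion $\phi_n=f_n^{-1}|_K$ and the bi-Lipschitz transfer, and you work directly in $M$ rather than lifting $K$ to the Dirichlet domain in $\HH^d$), but the argument is the same.
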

\begin{proof}
Suppose that $K \subset M$ is obtained from a compact lift $\widetilde{K} \subset \HH^d$ by projection $\HH^d \to \HH^d/\Gamma$. One can pick such $\widetilde{K}$ in the Dirichlet fundamental polyhedron $D_{\Gamma}(o)$ for some point $o \in \HH^d$.  Then there is $R>0$ such that $\widetilde{K} \subset B_{\HH^d}(o, R)$. Theorem~\ref{th:equivalent-convergences} implies that there exist $(K_n,r_n)$–approximate isometries $f_n:(M_n,v_n)\to (M,v)$ with $K_n\to 1$ and $r_n\to\infty$ as $n\to\infty$. This means that there is $N>0$ such that for any $n>N$ we have $r_n > R$ and a diffeomorphism of domains in $M_n$ and $M$, where the selected domain in $M$ covers the ball $B_{M}(x,r_n)$.
\end{proof}

\subsubsection{Hyperbolic Dehn filling}

Let $M$ be a $3$-manifold with the (connected) torus boundary $T = \partial M$.

\begin{definition}[Dehn filling]
A Dehn filling of $M$ along $T$ is the operation of gluing a solid torus $D \times S^1$ to $M$ via a diffeomorphism $\varphi: \partial D \times S^1 \to T$.
\end{definition}

The closed curve $\partial D \times \{x\}$ is glued to some simple closed curve $\gamma \subset T$. The result of this operation is a closed manifold $M'$. It is well known (see, for instance, \cite[Proposition 10.1.3]{Mar}) that
$$
\pi_1 (M')=\pi_1(M)/\langle\!\langle\gamma\rangle\!\rangle, 
$$
where $\langle\!\langle\gamma\rangle\!\rangle$ denotes the normal closure of $\gamma$ in $\pi_1(M)$. This gives a surjective homomorphism $f: \pi_1(M) \to \pi_1 (M')$ with kernel $\ker f = \langle\!\langle\gamma\rangle\!\rangle$.

We will say that our Dehn filling {\em kills} the curve $\gamma.$ Fixing a basis $\alpha, \beta$ for $\pi_1(T) < \Gamma$, we can write isotopy classes (called {\em slopes}) of homotopically non-trivial unoriented simple closed curves on $T$ as $[\gamma]=\alpha^p \beta^q$ for some coprime pair $(p,q)$. This gives a 1-1 correspondence between slopes and elements of $\Q \cup \{\infty\}$. Therefore, every number $\frac{p}{q}$ corresponds to a Dehn filling that kills an associated slope~$[\gamma]$.

Now we need to understand what happens with hyperbolic structures when we perform a Dehn filling procedure. Recall that by Proposition \ref{prop:thick-thin-finite-volume} any oriented finite-volume hyperbolic $3$-manifold $M = \HH^3/\Gamma$ with one cusp is diffeomorphic to a $3$-manifold with the torus boundary. The following theorem shows that almost all Dehn fillings $M'$ obtained from $M$ admit complete hyperbolic structures and therefore give rise to closed hyperbolic $3$-manifolds.

\begin{theorem}[Thurston's hyperbolic Dehn filling]\label{th:Thurston-Dehn}
    Let $M$ be a finite-volume hyperbolic $3$-manifold with one cusp. Let $\mathscr{M}$ be the set of all $3$-manifolds obtained from $M$ via Dehn filling. Then all but finitely many elements of $\mathscr{M}$ are closed hyperbolic $3$-manifolds, and $\mathscr{M}$ contains sequences $\{M_n\}$ converging to $M$.
\end{theorem}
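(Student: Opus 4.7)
The plan is to parametrize nearby incomplete hyperbolic structures on $M$ by a single complex parameter and then identify which parameter values extend to a genuine complete hyperbolic structure on the Dehn-filled manifold. Let $\rho_0 : \pi_1(M) \to \mathrm{PSL}_2(\CC)$ be the discrete faithful holonomy of the complete structure, and let $\alpha,\beta$ be a basis of $\pi_1(T) \subset \pi_1(M)$ for the cusp torus, so $\rho_0(\alpha), \rho_0(\beta)$ are commuting parabolics. By the Garland--Raghunathan / Thurston deformation theory recalled in Section~\ref{sec:def}, the character variety $\mathrm{Hom}(\pi_1(M),\mathrm{PSL}_2(\CC))/\!/\mathrm{PSL}_2(\CC)$ is a smooth one-complex-dimensional analytic germ at $[\rho_0]$. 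I would parametrize a neighbourhood by $u := L_\alpha$, the complex translation length of the deformed holonomy of $\alpha$, so that $u=0$ corresponds to the complete structure and, for small $u\neq 0$, both $\rho_u(\alpha)$ and $\rho_u(\beta)$ are loxodromic with complex lengths $u$ and $\tau(u)$, where $\tau$ is holomorphic and $\tau'(0)=\tau_0\in\CC\setminus\R$ is the cusp modulus.

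Next, I would write down the completeness condition for the $(p,q)$-filling. Geometrically, the filled solid torus is glued so that the slope $p\alpha + q\beta$ bounds a disk, hence for the incomplete structure on $M$ to extend across the missing core curve as a smooth hyperbolic metric, the holonomy of that slope must be a rotation by $2\pi$ on the universal cover. This yields the \emph{Dehn surgery equation}
\begin{equation*}
p\,u + q\,\tau(u) \;=\; 2\pi i.
\end{equation*}
For any coprime pair $(p,q)$ with $p^2+|q|^2$ large enough, the holomorphic map $u\mapsto pu+q\tau(u)$ has nonzero derivative $p+q\tau_0$ at $u=0$ (since $\tau_0\notin\R$), so by the inverse function theorem it is a biholomorphism of a small disk around $0$ onto its image, and the image contains $2\pi i/(p+q\tau_0)\cdot(p+q\tau_0)=2\pi i$ provided the disk is large enough. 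Rescaling, one checks that the unique small solution $u=u(p,q)$ exists and satisfies $u(p,q)\to 0$ as $p^2+|q|^2\to\infty$; the excluded pairs form a finite set.

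It then remains to upgrade the deformed representation $\rho_{u(p,q)}$ to an honest complete hyperbolic metric on the closed manifold $M(p,q)$, and to deduce geometric convergence. The key geometric input is that once the Dehn surgery equation holds, the developing map on a horoball neighbourhood of the cusp descends to a tubular neighbourhood of an embedded closed geodesic (the core of the glued solid torus), with meridian disc filled by a totally geodesic disk; this patches with the deformed metric on the thick part to give a complete hyperbolic structure on $M(p,q)$. Algebraic convergence $\rho_{u(p_n,q_n)}\to\rho_0$ combined with the Chabauty compactness of the side-pairing generators in Theorem~\ref{th:equivalent-convergences} then yields geometric convergence $M(p_n,q_n)\to M$. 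The main technical obstacle I expect is the extension-across-the-core step: verifying that the deformed developing map on the cusp end genuinely closes up to a smooth hyperbolic tube around the core geodesic, which requires a careful analysis of the horospherical cross-sections degenerating into a hyperbolic tube and a uniform injectivity-radius estimate on the thick part as $u\to 0$.
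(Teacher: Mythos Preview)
The paper does not actually prove this theorem; it simply cites Martelli \cite[Theorem 15.1.1, Corollary 15.1.3]{Mar} for the first assertion and Benedetti--Petronio \cite[Theorem E.5.1]{BP-lectures} for the convergence. Your proposal, by contrast, sketches the classical Thurston/Neumann--Zagier argument, and the outline is essentially the standard one those references carry out.

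Two technical points are worth flagging. First, your inverse-function-theorem step is phrased awkwardly because of the uniformity in $(p,q)$ (``provided the disk is large enough'' is in tension with ``small disk around $0$''). The clean formulation passes to \emph{generalized Dehn surgery coordinates}: the equation $x\,u + y\,\tau(u) = 2\pi i$ determines, for each small $u\neq 0$, a unique real pair $(x,y)$ (since $u,\tau(u)$ form an $\R$-basis of $\CC$ when $\tau_0\notin\R$), and the resulting map $u\mapsto(x,y)\in\R^2\cup\{\infty\}\cong S^2$ is a local diffeomorphism near $u=0$ onto a neighbourhood of~$\infty$. All coprime integer points outside a compact set then lie in the image, which is exactly the ``all but finitely many'' clause. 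Second, your deduction of geometric convergence is the one genuine soft spot: algebraic convergence $\rho_{u_n}\to\rho_0$ gives only one half of Chabauty convergence (elements of $\Gamma$ are limits of elements of $\Gamma_n$), and Theorem~\ref{th:equivalent-convergences} merely lists equivalent formulations of geometric convergence --- it does not manufacture geometric convergence from algebraic convergence. The usual remedy is precisely the tube estimate you allude to at the end: $M(p,q)$ decomposes as a $(1+\varepsilon_n)$-bilipschitz image of the thick part of $M$ together with an $R$-tube about the core geodesic with $R\to\infty$, which yields the framed Gromov--Hausdorff convergence of Theorem~\ref{th:equivalent-convergences}(2) directly.
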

\begin{proof}
    For a complete proof of the first part of the theorem we refer to Martelli \cite[Theorem 15.1.1 and Corollary 15.1.3]{Mar}. The second (convergence) part can be found in Benedetti--Petronio \cite[Theorem E.5.1]{BP-lectures}.
\end{proof}

\begin{remark}\label{rem:Dehn-deformation}
Denote by $\rho \colon \Gamma \to \Isom(\HH^3) \cong \mathrm{PSL}_2(\CC)$ the initial embedding of $\pi_1 (M) = \Gamma$ in $\mathrm{PSL}_2(\CC)$. Then the above discussed homomorphism $f_{p,q}= f: \pi_1(M) \to \pi_1 (M')$ leads to a deformation of the representation $\rho$ into a (non-faithful) representation $\rho_{p,q}: \Gamma \to \mathrm{PSL}_2(\CC)$, where $\rho_{p,q} = \rho' \circ f$ with $\rho': \pi_1(M') \to \mathrm{PSL}_2(\CC)$ being a faithful representation of $\pi_1(M')$ obtained in Theorem~\ref{th:Thurston-Dehn}.    
\end{remark}

\subsection{Hyperbolic Dehn filling of orientable hyperbolic $3$-orbifolds}\label{sec:orbifolds} In the previous subsection, we discussed the hyperbolic Dehn filling for manifolds, however, this theory can be adapted to the broader setting of hyperbolic $3$-orbifolds, which was done by Dunbar and Meyerhoff \cite{DM94}. 

Let $\mathcal{O}$ be a complete, finite-volume, orientable hyperbolic $3$-orbifold with at least one cusp. Each cusp admits a horospherical cross-section which is a closed Euclidean $2$-orbifold $\mathcal F$. Thus, $\mathcal F$ is either a nonsingular torus, a pillowcase (see Figure \ref{fig:pillowcase}), or a turnover of
type $S^2(n_1, n_2, n_3)$ with $\frac{1}{n_1}+\frac{1}{n_2}+\frac{1}{n_3}=1$:
$$
\mathbb{T}^2,\quad S^2(2,2,2,2),\quad S^2(3,3,3),\quad S^2(2,3,6),\quad S^2(2,4,4).
$$
We recall that $S^2(a_1,\dots,a_k)$ denotes an orbifold with underlying topological space $\mathbb{S}^2$ and cone points of orders $a_1,\dots,a_k$.

Let $\Gamma = \pi_1^{\mathrm{orb}}(\mathcal{O})$, and let $\Gamma_u < \Gamma$ be a maximal parabolic subgroup corresponding to a cusp; $\Gamma_u$ is precisely the stabilizer of a point $u \in \partial \mathbb{H}^3$. The cusp cross-section $\mathcal F$ is naturally identified with the Euclidean orbifold $\mathbb{E}^2 / \Gamma_u$, and $\Gamma_u$ fits into a short exact sequence
$$
1 \longrightarrow \Lambda \cong \mathbb{Z}^2 \longrightarrow \Gamma_u \longrightarrow H \longrightarrow 1,
$$
where $\Lambda$ is the subgroup of translations in $\Gamma_u$ and $H$ is a finite subgroup of $\mathrm{O}_2(\R)$, the rotational holonomy of $\mathcal F$.

A cusp of $\mathcal{O}$ is called \emph{non-rigid} (or \emph{flexible}) if its cross-section is $\mathbb{T}^2$ or the pillowcase $S^2(2,2,2,2)$, and \emph{rigid} if its cross-section is $S^2(3,3,3)$, $S^2(2,3,6)$ or $S^2(2,4,4)$.  Equivalently, $F$ is non-rigid if and only if it admits a nontrivial $1$-parameter family of Euclidean similarity structures, i.e. if its Teichmüller space has positive dimension, and rigid otherwise.

For a non-rigid cusp, the subgroup of translations $\Lambda \cong \mathbb{Z}^2$ in $\Gamma_p$ is well defined up to conjugacy. A \emph{slope} on the cusp is, by definition, an unoriented isotopy class of essential simple closed curves on $F$ in the orbifold sense. Group-theoretically, a slope determines a primitive element $\lambda \in \Lambda$, well defined up to inversion and the action of the finite holonomy $H$. One can therefore regard the set of slopes as the set of $H$-orbits of primitive elements in $\Lambda$ modulo $\lambda \sim \lambda^{-1}$.

Topological Dehn filling on a torus cusp is just the usual operation from the manifold case: one glues a solid torus along the boundary component, identifying a chosen slope on $\mathbb{T}^2$ with a meridian of the solid torus. In the orbifold setting one may more generally perform \emph{orbifold Dehn filling} by gluing a solid toric orbifold whose core curve is a cone geodesic of order $m \in \mathbb{N}$; the case $m=1$ recovers manifold Dehn filling.

For a cusp with cross-section $S^2(2,2,2,2)$, there is an analogous operation where one glues in a ``solid pillow,'' i.e.\ a $3$-ball with two unknotted singular arcs labelled by $\mathbb{Z}/2\mathbb{Z}$. This construction can be regarded as the quotient of the torus case by the elliptic involution on a $2$-fold torus cover of the cusp.  In this setting, a slope is represented by a simple closed curve on $S^2(2,2,2,2)$ which separates the four order-$2$ cone points into two pairs, and Dehn filling identifies this curve with a meridian of the solid pillow.

In contrast, there is \emph{no} nontrivial Dehn filling operation on a rigid cusp: no quotient of a solid torus has a Euclidean turnover as boundary, so one cannot topologically cap off a turnover boundary component by a solid toric orbifold.  Thus, Dehn filling is only defined on cusps whose cross-sections are $\mathbb{T}^2$ or $S^2(2,2,2,2)$.

Fix a non-rigid cusp of $\mathcal{O}$ with peripheral subgroup $\Gamma_u < \Gamma$ and translation subgroup $\Lambda \cong \mathbb{Z}^2$. Let $\lambda \in \Lambda$ be a primitive element representing a slope on the cusp in the sense above.

\begin{itemize}
  \item If the cusp cross-section is a torus, $\mathcal F = \mathbb T^2$, then $\Gamma_u = \Lambda \cong \mathbb{Z}^2$ and we may choose generators $a,b$ for $\Gamma_u$ so that $\lambda = a^p b^q$ for some coprime integers $p,q$.

  \item If the cross-section is the pillowcase $\mathcal F = S^2(2,2,2,2)$, then we have the short exact sequence
  $$
  1 \longrightarrow \Lambda \cong \mathbb{Z}^2 \longrightarrow \Gamma_u \longrightarrow \mathbb{Z}/2\mathbb{Z} \longrightarrow 1,
  $$
  and one may choose a presentation
  $$
  \Gamma_u = \langle a,b,t \mid [a,b]=1,\ t^2 = 1,\ tat^{-1} = a^{-1},\ tbt^{-1} = b^{-1} \rangle,
  $$
  where $\langle a,b\rangle \cong \mathbb{Z}^2$ is the subgroup of translations and $t$ is the elliptic involution. A slope is again represented by a primitive element $\lambda = a^p b^q \in \Lambda$.
\end{itemize}

Given $\lambda$ and an integer $m \geq 1$, the \emph{orbifold Dehn filling of slope $\lambda$ and cone order $m$} is, at the level of orbifold fundamental groups, the quotient
$$
\Gamma(\lambda;m) := \Gamma \big/ \langle\!\langle \lambda^m \rangle\!\rangle,
$$
where $\langle\!\langle \cdot \rangle\!\rangle$ denotes normal closure in $\Gamma$. When $\mathcal F = \mathbb{T}^2$ and $m=1$, this recovers the usual manifold Dehn filling, and for general $m$ it corresponds to turning the core curve of the attached solid torus into a cone geodesic of order $m$. When $\mathcal F = S^2(2,2,2,2)$ this describes the case of gluing a solid pillow whose core circle has cone order $m$.

The analytic and geometric theory of Dehn filling extends to orbifolds almost verbatim from the manifold case.  Fix a finite-volume orientable hyperbolic $3$-orbifold $\mathcal{O}$ with at least one non-rigid cusp. By the hyperbolic Dehn surgery theorem (see Dunbar--Meyerhoff \cite[Theorem 5.3]{DM94}) there exists a finite set of slopes $\mathcal{E}$ on the non-rigid cusps of $\mathcal{O}$ such that the following holds. For each choice of slopes $\lambda_i$ on the non-rigid cusps of $\mathcal{O}$ with $\lambda_i \notin \mathcal{E}$, and each choice of sufficiently large cone orders $m_i \in \mathbb{N}$, the orbifold $\mathcal{O}_{\lambda_i; m_i}$ obtained by topological orbifold Dehn filling along the $(\lambda_i;m_i)$ admits a unique complete hyperbolic orbifold structure. Moreover, its orbifold fundamental group is naturally isomorphic to the quotient of $\Gamma$ obtained by adding the relations $\lambda_i^{m_i} = 1$, and as the lengths of the filling slopes (and the cone orders $m_i$) tend to infinity, the resulting hyperbolic orbifolds $\mathcal{O}_{\lambda_i; m_i}$ converge geometrically to $\mathcal{O}$, see Dunbar--Meyerhoff \cite[Theorems 5.4 and 6.4]{DM94}.

In particular, every closed orientable hyperbolic $3$-orbifold arises from a cusped orientable hyperbolic $3$-orbifold by a (possibly multi-cusped) orbifold Dehn filling on non-rigid cusps, exactly mirroring the manifold setting; see \cite[Theorem 5.5]{DM94}.

\begin{figure}
    \centering
    \begin{tikzpicture}[scale=0.9,>=latex]

\def\R{2.2}   
\def\r{1.0}   

\draw (0,0) circle (\R);
\draw (0,0) circle (\r);

\draw[very thick] (0,-\R) arc (-90:90:\R);
\draw[very thick] (0,-\r) arc (-90:90:\r);

\draw[dash pattern=on 8pt off 4pt] (0,-3) -- (0,3);

\fill (0,\R)   circle (0.065);
\fill (0,\r)   circle (0.065);
\fill (0,-\r)  circle (0.065);
\fill (0,-\R)  circle (0.065);

\node[right]  at (0.05,\R+0.3)   {$\tilde a$};
\node[left]  at (-0.05,\r-0.3)  {$\tilde b$};
\node[right]  at (0.05,-\r+0.3) {$\tilde c$};
\node[left]  at (-0.05,-\R-0.3)  {$\tilde d$};

\pgfmathsetmacro{\ymer}{0.5*(\R+\r)}   
\pgfmathsetmacro{\bmer}{0.5*(\R-\r)}

\begin{scope}
  \clip (0,0) circle (\R);
  \clip (-3,-3) rectangle (0,3);
  \draw[dashed, very thick] (0,\ymer) ellipse (0.35 and \bmer);
\end{scope}

\begin{scope}
  \clip (0,0) circle (\R);
  \clip (0,-3) rectangle (3,3);
  \draw[very thick] (0,\ymer) ellipse (0.35 and \bmer);
\end{scope}

\begin{scope}
  \clip (0,0) circle (\R);
  \clip (-3,-3) rectangle (0,3);
  \draw[dashed, very thick] (0,-\ymer) ellipse (0.35 and \bmer);
\end{scope}

\begin{scope}
  \clip (0,0) circle (\R);
  \clip (0,-3) rectangle (3,3);
  \draw[very thick] (0,-\ymer) ellipse (0.35 and \bmer);
\end{scope}

\pgfmathsetmacro{\ael}{0.5*(\R-\r)}     
\pgfmathsetmacro{\bel}{0.25*(\R-\r)}    

\begin{scope}
  \clip (0,0) circle (\R);           
  \clip (-3,0) rectangle (3,3);      
  \draw[dashed] (\ymer,0) ellipse [x radius=\ael, y radius=\bel];
\end{scope}

\begin{scope}
  \clip (0,0) circle (\R);
  \clip (-3,-3) rectangle (3,0);     
  \draw (\ymer,0) ellipse [x radius=\ael, y radius=\bel];
\end{scope}

\begin{scope}
  \clip (0,0) circle (\R);           
  \clip (-3,0) rectangle (3,3);      
  \draw[dashed] (-\ymer,0) ellipse [x radius=\ael, y radius=\bel];
\end{scope}

\begin{scope}
  \clip (0,0) circle (\R);
  \clip (-3,-3) rectangle (3,0);     
  \draw (-\ymer,0) ellipse [x radius=\ael, y radius=\bel];
\end{scope}

\coordinate (C) at (0,-2.9);

\draw[->]
  (C) ++(160:0.5 and 0.2)         
  arc[start angle=160,end angle=410,
      x radius=0.5,y radius=0.2];
\node[below] at (-0.9,3) {$\mathbb{T}^{2}$};
\node[right] at (0.6,-2.95) {$\mathbb Z_{2}$};

\draw[->,thick] (3,0) -- (4.5,0);


\coordinate (P) at (5,  1);
\coordinate (Q) at (7,  1);
\coordinate (R) at (7, -1);
\coordinate (S) at (5, -1);

\draw[very thick] (P) .. controls (6,  0.75) .. (Q);   
\draw[very thick] (Q) .. controls (6.75, 0.0) .. (R);  
\draw[very thick] (R) .. controls (6, -0.75) .. (S);   
\draw[very thick] (S) .. controls (5.25, 0.0) .. (P);

\fill (P) circle (0.065);
\fill (Q) circle (0.065);
\fill (R) circle (0.065);
\fill (S) circle (0.065);

\node[above left]  at (P) {$a$};
\node[above right] at (Q) {$b$};
\node[below right] at (R) {$c$};
\node[below left]  at (S) {$d$};

\pgfmathsetmacro{\aH}{0.8}  
\pgfmathsetmacro{\bH}{0.25}   

\begin{scope}
  \clip (6,0) circle (1.3);          
  \clip (4.5,0) rectangle (7.5,3);   
  \draw[dashed] (6,0) ellipse [x radius=\aH, y radius=\bH];
\end{scope}
\begin{scope}
  \clip (6,0) circle (1.3);
  \clip (4.5,-3) rectangle (7.5,0);  
  \draw (6,0) ellipse [x radius=\aH, y radius=\bH];
\end{scope}

\node[below] at (6,-2.0) {$S^2(2,2,2,2)=\mathbb{T}^{2}/\mathbb Z_{2}$};

\end{tikzpicture}
    \caption{The pillowcase $S^2(2,2,2,2)=\mathbb{T}^2/\Z_2$ (right), where $\Z_2 = \Z/2\Z$ is generated by the $\pi$-rotation around the vertical axis of the torus $\mathbb{T}^2$ (left).}
    \label{fig:pillowcase}
\end{figure}
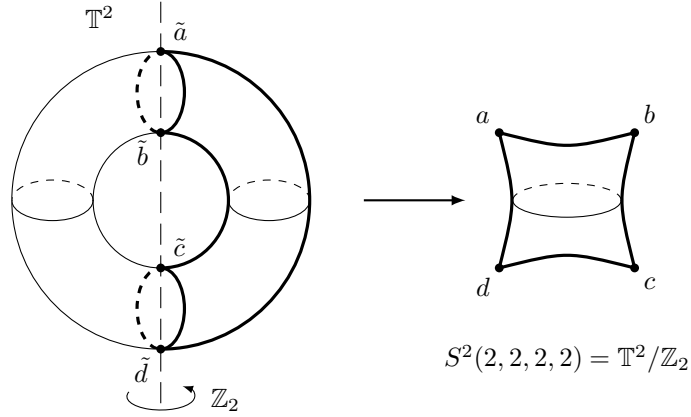

\subsection{Hyperbolic Dehn filling in the case of hyperbolic reflection groups}
Generally, in the case of non-orientable manifolds or orbifolds, one can simply pass to the orientable double cover and then discuss the Dehn filling process and geometric convergence in the oriented case.
However, for discrete groups $\Gamma$ generated by reflections in hyperplanes, the Dehn filling process deserves to be considered separately since it can be described more explicitly in terms of the corresponding Coxeter polyhedra. It turns out that the Dehn fillings $\Gamma_n$ obtained from $\Gamma$ can also be forced to be reflection groups, whose fundamental Coxeter polyhedra $P_n$ converge to the polyhedron $P$ by contracting a certain finite edge.

Let $Q \subset \mathbb{H}^3$ be a finite-volume Coxeter polyhedron with an edge $e$ with two finite vertices, and let the dihedral angle at $e$ be $\frac{\pi}{m}$ for some $m \ge 6$. Note that since $m\ge 6$, the dihedral angle at each edge adjacent to $e$ in $Q$ is $\frac{\pi}{2}$. By Andreev's theorem \cite{And70, And70b}, there always exists a finite-volume Coxeter polyhedron $Q'$ of the same combinatorial type and with the same dihedral angles as $Q$ except that the dihedral angle at $e$ in $Q'$ is replaced with $\frac{\pi}{n}$. If $n \geq m$, we say that $Q'$ is obtained from $Q$ via a {\em $\frac{\pi}{n}$-contraction} at $e$. Thus, we can denote the polyhedra with angle $\pi/n$ at $e$ by $P_n$. Andreev's theorem also allows to contract this edge $e$ to an ideal vertex $v_\infty$: combinatorially it means that we replace $e$ with a vertex of valence $4$ without changing the remaining combinatorics of the polyhedron. This contraction gives rise to a finite-volume Coxeter polyhedron $P$. We refer to Kolpakov~\cite{Kol12} for more details on contractions of edges.

What is important to us here is that we can view $P_n$ as having been obtained from $P$ via Dehn filling. Indeed, let $v$ be an ideal vertex of $P$, and let $v$ be the intersection of facets $F_1, F_2, F_3, F_4$ of $P$; see Figure~\ref{fig:Coxeter-Dehn}, the middle. These facets intersect any horosphere centered at $v$ in a Euclidean square, and therefore the corresponding reflections $r_1, r_2, r_3, r_4$ generate the parabolic reflection group $$\Gamma_v = \langle r_1, r_2, r_3, r_4 \mid r_i^2 = (r_1 r_2)^2 = (r_2 r_3)^2 = (r_3 r_4)^2 = (r_1 r_4)^2 = e\rangle,$$
which is isomorphic to $(\nicefrac{\Z}{2\Z} * \nicefrac{\Z}{2\Z}) \times (\nicefrac{\Z}{2\Z}  * \nicefrac{\Z}{2\Z} )$.

Note that the parabolic isometries $a = r_1 r_3$ and $b = r_2 r_4$ commute and generate a $\Z \times \Z$ subgroup of $\Gamma_v$. Algebraically, the $(p,q)$-Dehn filling in this case means the quotient by the normal closure of an isometry $a^p b^q$. Generally, it gives non-Coxeter groups because of the new relation $(r_1 r_3)^p (r_2 r_4)^q = e$. However, in the case of $(n,0)$- or $(0,n)$-Dehn fillings we obtain precisely the Coxeter groups $\Gamma_n$ or $\Gamma'_n$ generated by reflections in the walls of $P_n$ or $P'_n$, respectively, as in Figure \ref{fig:Coxeter-Dehn}. Indeed, this gives only an extra relation $(r_1 r_3)^n=e$ or $(r_2 r_4)^n = e$ in the quotient, therefore giving a Coxeter group. Denoting the generators in the quotient by $r_{i,n}$ or $r'_{i,n}$ in the case of $(n,0)$- and $(0,n)$-Dehn fillings, we obtain precisely the Coxeter groups $\Gamma_n$ and $\Gamma'_n$, generated by reflections in the walls of Coxeter polyhedra $P_n$ and $P'_n$. Combinatorially, these two sequences of polyhedra $P_n$ and $P'_n$ are obtained from the polyhedron $P$ by replacing a vertex $v$ with a compact edge in only two possible ways, as shown in Figure~\ref{fig:Coxeter-Dehn}.

As in the case of general Dehn fillings, such contractions of edges to infinity do not exist in higher dimensions (it was explicitly shown in \cite[Theorem 4.2]{BD23} for almost right-angled polyhedra).

\begin{figure}[ht]
    \centering
    \begin{tikzpicture}[scale=1.2]
\coordinate (1) at (0.5, 1) ;
  \coordinate (2) at (0.5, 0) ;
  \coordinate (3) at (1, 1.5) ;
  \coordinate (4) at (1, -0.5) ;
  \coordinate (a) at (0, -0.5) ;
  \coordinate (b) at (0, 1.5) ;
  \draw (-0.2,0.5) node {$r_{1,n}$};
  \draw (1.2,0.5) node {$r_{3,n}$};
  \draw (0.5,1.5) node {$r_{2,n}$};
  \draw (0.5,-0.5) node {$r_{4,n}$};
  \draw[line width=2.pt, red] (1) -- node [right] {$e$} (2) ;
  \draw[line width=1.pt] (1) -- (3) ; 
  \draw[line width=1.pt] (a) -- (2) ; 
  \draw[line width=1.pt] (b) -- (1) ;
  \draw[line width=1.pt] (4) -- (2) ; 
  
  \draw (0.5,-1.2) node {$P_n$};
  \draw (4.5-0.4,-1.2) node {$P$};
  \draw (9-0.5,-1.2) node {$P'_n$};
  \draw[
    thick,
    -{Stealth[length=4pt,width=4pt]},
    decorate,
    decoration={
      snake,
      amplitude=0.8mm,      
      segment length=3mm,   
      pre length=0pt,
      post length=1.2mm     
    }
  ] (3-0.3, -1.2) -- (2-0.3,-1.2);
  \draw[
    thick,
    -{Stealth[length=4pt,width=4pt]},
    decorate,
    decoration={
      snake,
      amplitude=0.8mm,      
      segment length=3mm,   
      pre length=0pt,
      post length=1.2mm     
    }
  ] (6-0.5,-1.2) -- (7-0.5, -1.2);
  \fill[black] (1) circle (2pt) ;
  \fill[black] (2) circle (2pt) ; 
  \fill[black] (3) circle (2pt) ;
  \fill[black] (4) circle (2pt) ;  
  \fill[black] (a) circle (2pt) ;
  \fill[black] (b) circle (2pt) ;
  \draw[
    thick,
    -{Stealth[length=4pt,width=4pt]},
    decorate,
    decoration={
      snake,
      amplitude=0.8mm,      
      segment length=3mm,   
      pre length=0pt,
      post length=1.2mm     
    }
  ] (3-0.3, 0.5) -- (2-0.3,0.5);
  \coordinate (6) at (4-0.4, 0);
  \coordinate (7) at (5-0.4, 0) ;
  \coordinate (8) at (4-0.4, 1) ;
  \coordinate (9) at (5-0.4, 1) ;
  \coordinate (5) at (4.5-0.4, 1/2);
  \draw (3.8-0.4,0.5) node {$r_1$};
  \draw (5.2-0.4,0.5) node {$r_3$};
  \draw (4.5-0.4,1) node {$r_2$};
  \draw (4.5-0.4,0) node {$r_4$};
  \draw[line width=1.pt] (6) -- (5) ;
  \draw[line width=1.pt] (5) -- (7) ;
  \draw[line width=1.pt] (8) -- (5) ; 
  \draw[line width=1.pt] (5) -- (9) ; 
  \fill[black] (6) circle (2pt);
  \fill[black] (7) circle (2pt) ; 
  \fill[black] (8) circle (2pt) ;
  \fill[black] (9) circle (2pt) ;
  \fill[red] (5) circle (2pt) node [right] {$v$}; 
  \draw[
    thick,
    -{Stealth[length=4pt,width=4pt]},
    decorate,
    decoration={
      snake,
      amplitude=0.8mm,      
      segment length=3mm,   
      pre length=0pt,
      post length=1.2mm     
    }
  ] (6-0.5,0.5) -- (7-0.5,0.5);
  \coordinate (10) at (8-0.5, 0) ;
  \coordinate (11) at (10-0.5, 0) ;
  \coordinate (12) at (8-0.5, 1) ;
  \coordinate (13) at (10-0.5, 1) ;
  \coordinate (14) at (8.5-0.5, 1/2) ;
  \coordinate (15) at (9.5-0.5, 1/2) ;
  \draw (8-0.5-0.2,0.5) node {$r'_{1,n}$};
  \draw (10.2-0.5,0.5) node {$r'_{3,n}$};
  \draw (9-0.5,1.2) node {$r'_{2,n}$};
  \draw (9-0.5,-0.2) node {$r'_{4,n}$};
  \draw[line width=1.pt] (10) -- (14) ;
  \draw[line width=1.pt] (12) -- (14) ;
  \draw[line width=1.pt] (11) -- (15) ; 
  \draw[line width=1.pt] (15) -- (13) ; 
  \draw[line width=2.pt,red] (15) -- node [above] {$e'$} (14) ; 
  \fill[black] (10) circle (2pt) ;
  \fill[black] (11) circle (2pt) ; 
  \fill[black] (12) circle (2pt) ;
  \fill[black] (13) circle (2pt) ;
  \fill[red] (14) circle (2pt) ;
  \fill[red] (15) circle (2pt) ;
    \end{tikzpicture}
    \caption{Hyperbolic Dehn filling for Coxeter polyhedra: an ideal red vertex $v$ in the middle can be replaced with a compact edge in two ways, see the red edges $e$ (left) and $e'$ (right). Thus one obtains two sequences of polyhedra $P_n$ and $P'_n$ converging to $P$ as $n\to \infty$ along with contracting red edges $e,e'$ to the vertex $v$.}
    \label{fig:Coxeter-Dehn}
\end{figure}
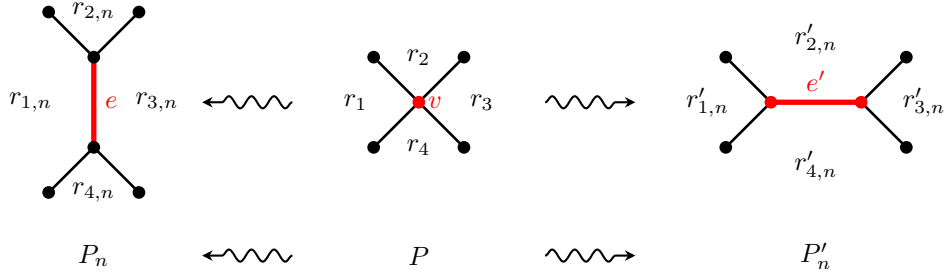

\subsection{Random walks}\label{sec:rw}

Let $\Gamma$ be a countable discrete group acting properly discontinuously on a metric space $(X, d)$ by isometries, and let $\mu$ be a probability measure on $\Gamma$. Consider a random walk
\[
\omega_n := g_1 \dots g_n,
\]
where the $(g_i)$ are i.i.d. with distribution $\mu$. We will say that a random walk $(\omega_n)$ has finite range if $\mu$ has finite support, and we will call it non-degenerate if the support of $\mu$ generates $\Gamma$ as a semigroup. 

Let $o \in X$ be a basepoint. Let $\mu^{*n} = \underbrace{\mu * \ldots * \mu}_{n \text{ times}}$ be the $n$th convolution of the measure $\mu$ with itself. Then one defines:

\begin{enumerate}
    \item the \textit{asymptotic entropy} $h(\mu)$, introduced by Avez \cite{Av72},
    \[
    h(\mu) := \lim_{n \to \infty} \frac{H(\mu^{*n})}{n} := \lim_{n \to \infty} \frac{1}{n} \sum_{g} -\mu^{*n}(g) \log \mu^{*n}(g),
    \]
    where $H(\cdot)$ denotes the {\em ordinary entropy} defined for any measure $\lambda$ as follows: for any countable set $A$ we have
    $$
H(\lambda) := - \sum_{a \in A} \lambda(a) \log \lambda(a);
    $$
    \item the \textit{drift} $\ell(\mu)$ of the random walk
    \[
    \ell(\mu) := \lim_{n \to \infty} \frac{d(o, \omega_n o)}{n}
    \]
    where, as long as $\mu$ has finite first moment, the limit exists almost surely and is independent of $\omega_n$ and $o$;
    \item the \textit{critical exponent} $v$ of the action
    \[
    v := \limsup_{R \to \infty} \frac{1}{R} \log \#\{ g \in \Gamma \ \mid\ d(o, g o) \leq R \}.
    \]
\end{enumerate}

These quantities are related by the \textit{fundamental inequality} due to Guivarc’h \cite{Gui80} (see also Vershik \cite{Ver01}, who suggested the name \textit{fundamental inequality})
\[
h(\mu) \leq \ell(\mu) v.
\]

Let us denote the support of $\mu$ by $S$. Recall that any random walk on a non-elementary subgroup of $\mathrm{Isom}(\mathbb{H}^d)$ will converge (see Maher--Tiozzo \cite{MT18}) to the geometric boundary $\mathbb{S}^{d-1} = \partial \mathbb{H}^d$. Equivalently, this means that for $\mathbb{P}$-a.e. sample path $(\omega_1, \omega_2, \dots)$ the limit $\lim_{n \rightarrow \infty} \omega_n o = \omega_\infty \in \mathbb{S}^{d-1}$ exists, where $\mathbb{P}$ is the probability measure which comes from $(S^\mathbb{N}, \mu^{ \otimes \mathbb{N}})$ via the pushforward 
$$
(g_1, g_2, g_3, \dots) \mapsto (g_1, g_1 g_2, g_1 g_2 g_3, \cdots).
$$
Thus, a random walk induces the (unique) {\em hitting measure} $\nu_\mu = i_*(\mathbb{P})$, where $i$ is the ($\mathbb{P}$-a.e. defined) mapping which takes a sample path $(\omega_n)_{n \in \mathbb{N}}$ to the limit $\omega_\infty \in S^2$. Another way to express this definition is to write

\begin{definition}
For any Borel set $A \subseteq \partial \mathbb{H}^d$,
$$
\nu_\mu(A) = \mathbb{P}\left(\left\{\lim_{n \to \infty} \omega_n o \in A\right\}\right).
$$
\end{definition}

We will be dealing with hyperbolic groups $\Gamma$ acting on $\mathbb{H}^2$ and $\mathbb{H}^3$, and in our setting,  we can make use of the following powerful criterion. We cite it here in the version for geometrically finite actions from \cite[Theorem 4.1]{GekhtmanTiozzo}, which extends the previous version \cite[Theorem 1.5]{blachere2011harmonic} for hyperbolic groups. In this context, a \emph{Patterson-Sullivan measure} for the action of $\Gamma$ on a $\delta$-hyperbolic space $X$ is a quasiconformal measure on $\partial X$ of dimension equal to the critical exponent $v$ of $\Gamma$.

\begin{theorem}\label{BHMTheorem 1.4}
Let $(X, d)$ be a $\delta$-hyperbolic, proper metric space, let $\Gamma$ be a geometrically finite group of isometries of $X$, and let $o\in X$ be a basepoint.

Let $\mu$ be a non-degenerate measure on $\Gamma$ with finite superexponential moment, let $\nu$ be its corresponding hitting measure, and let $\kappa$ be a Patterson-Sullivan measure on $\partial X$.

Then the following conditions are equivalent.
\begin{enumerate}
    \item The equality 
    $$h(\mu) = \ell(\mu) v$$ 
    holds.
    \item The measures $\nu$ and $\kappa$ are in the same measure class.
    \item The measures $\nu$ and $\kappa$ are in the same measure class with Radon--Nikodym derivatives bounded from above and below.
    \item There exists $C\geq 0$ such that for every $g \in \Gamma$, $$|d_G(e,g)-vd(o,go)|\leq C,$$
    where $d_G$ denotes the Green metric for the random walk driven by $\mu$.
\end{enumerate}
\end{theorem}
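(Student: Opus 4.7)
The plan is to prove the chain of implications (1) $\Longrightarrow$ (4) $\Longrightarrow$ (3) $\Longrightarrow$ (2) $\Longrightarrow$ (1), with the Green metric
\[
d_G(g,h) := -\log F(g,h),
\]
where $F(g,h)$ is the probability that a $\mu$-random walk starting at $g$ ever visits $h$, serving as the main bridge between the probabilistic and the geometric data. Under the finite superexponential moment hypothesis, $d_G$ is left-invariant, quasi-isometric to the word metric, and the Green function factorizes along geodesics via \emph{Ancona-type inequalities}; these are the workhorse throughout. I would also use two key formulas: Derriennic's identity $h(\mu) = \lim \frac{1}{n} d_G(e,\omega_n)$ and the Patterson-Sullivan shadow lemma, which in the geometrically finite setting says $\kappa(\mathcal O_g) \asymp e^{-v\, d(o,go)}$ for shadows $\mathcal O_g$ of balls around $go$ (with controlled multiplicative errors, away from parabolic points).

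The implication (4) $\Longrightarrow$ (3) is the most geometric: the hypothesis $|d_G(e,g) - v\, d(o,go)| \leq C$ gives $F(e,g) \asymp e^{-v d(o,go)}$, and Ancona inequalities promote this to the estimate $\nu(\mathcal O_g) \asymp F(e,g) \asymp e^{-v d(o,go)}$. Comparing with the shadow lemma for $\kappa$ shows that $\nu$ and $\kappa$ have comparable mass on a generating family of sets, giving bounded Radon--Nikodym derivatives. Step (3) $\Longrightarrow$ (2) is immediate. For (2) $\Longrightarrow$ (1), I would invoke the conformal property $\frac{d g_*\kappa}{d\kappa}(\xi) = e^{-v\beta_\xi(go,o)}$ of the Patterson--Sullivan measure, compute the asymptotic entropy via the Furstenberg formula $h(\mu) = -\int\!\!\int \log\frac{dg^{-1}_*\nu}{d\nu}(\xi)\, d\nu(\xi)\, d\mu(g)$, and use the equivalence $\nu \sim \kappa$ together with convergence of Busemann cocycles along sample paths to rewrite the integral as $v\, \ell(\mu)$.

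The implication (1) $\Longrightarrow$ (4) is the heart of the theorem. Derriennic's formula together with $\ell(\mu) = \lim d(o,\omega_n o)/n$ shows that, under $h(\mu) = v\ell(\mu)$, the two cocycles $d_G(e,\omega_n)$ and $v\, d(o,\omega_n o)$ have the same linear growth rate almost surely. To upgrade this asymptotic equality into a uniform bound $|d_G(e,g) - v\,d(o,go)| \leq C$ on all of $\Gamma$, I would follow the Blach\`ere--Ha\"\i ssinsky--Mathieu strategy: use the Ancona inequalities to show that the Green function is \emph{approximately multiplicative} along geodesics, deduce that the map $\xi \mapsto \frac{d\nu}{d\kappa}(\xi)$ is a multiplicative cocycle that is essentially constant on the limit set, and conclude rigidity via an argument that sample paths typically track geodesics through the thick part.

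The main obstacle is that classical proofs (BHM, Gou\"ezel) are written for hyperbolic groups acting cocompactly, whereas here the action is only geometrically finite, so parabolic subgroups must be dealt with carefully. Both the Ancona inequalities and the shadow lemma fail in their usual form along sequences escaping into cusps; one needs a \emph{relative} Ancona inequality controlling Green-function excursions into parabolic subgroups, together with the fact that typical $\mu$-trajectories spend a sublinear amount of time in the thin part. Establishing these cusp estimates (in the spirit of Schapira--Tapie and Gekhtman--Tiozzo) is where the real work lies; once available, the implications above follow in essentially the same form as in the cocompact case.
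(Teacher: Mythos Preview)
The paper does not prove this theorem at all: it is quoted verbatim from \cite[Theorem~4.1]{GekhtmanTiozzo} (extending \cite[Theorem~1.5]{blachere2011harmonic}) and used as a black box in the proofs of Theorems~\ref{th:main-Kleinian}, \ref{th:main-3-orbifold}, \ref{th:main-Coxeter}, and \ref{th:main-Fuchsian}. So there is no ``paper's own proof'' to compare your proposal against.

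That said, your sketch is a faithful outline of how the cited references actually establish the result: the chain (1) $\Rightarrow$ (4) $\Rightarrow$ (3) $\Rightarrow$ (2) $\Rightarrow$ (1), with the Green metric and Ancona inequalities as the bridge, is exactly the Blach\`ere--Ha\"issinsky--Mathieu architecture, and you correctly identify that the new content in the geometrically finite case is the relative Ancona inequality and the shadow lemma near parabolic points, which is precisely what \cite{GekhtmanTiozzo} supplies. If you were asked to reproduce a proof rather than cite one, your plan is on the right track; but for the purposes of this paper, a citation suffices and no proof is expected.
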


\begin{corollary}[\cite{GekhtmanTiozzo}, Corollary 4.2] \label{singularcorollary}
If $\Gamma\curvearrowright X$ is not convex cocompact then $\kappa$ and $\nu$ are mutually singular.
\end{corollary}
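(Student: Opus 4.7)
My plan is to deduce singularity by contradicting part (4) of Theorem \ref{BHMTheorem 1.4}. First, I would invoke the standard dichotomy: both $\nu$ and $\kappa$ are $\Gamma$-quasi-invariant Borel probability measures on $\partial X$, and both are $\Gamma$-ergodic — $\kappa$ by the Hopf--Tsuji--Sullivan theory adapted to geometrically finite actions, and $\nu$ because the tail action on the Poisson boundary is ergodic. Two ergodic quasi-invariant probability measures are either in the same measure class or mutually singular, so it suffices to rule out equivalence of $\nu$ and $\kappa$.

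Assume, for contradiction, that $\nu \sim \kappa$. By the equivalence $(2) \Leftrightarrow (4)$ in Theorem \ref{BHMTheorem 1.4}, there exists a constant $C \ge 0$ such that
$$
|d_G(e, g) - v\, d(o, g o)| \le C \qquad \text{for every } g \in \Gamma.
$$
Since $\Gamma \curvearrowright X$ is geometrically finite but \emph{not} convex cocompact, the limit set of $\Gamma$ contains a bounded parabolic fixed point, so $\Gamma$ admits a non-trivial parabolic subgroup. Pick a parabolic element $p \in \Gamma$ of infinite order. A standard horoball computation in a $\delta$-hyperbolic space gives
$$
d(o, p^n o) = 2 \log n + O(1) \qquad (n \to \infty),
$$
so the comparison above would force the Green metric to grow only logarithmically along the parabolic orbit:
$$
d_G(e, p^n) = 2 v \log n + O(1).
$$

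The heart of the proof — and where I expect the main obstacle to lie — is to show that this logarithmic growth is incompatible with the dynamics of the random walk. The intuition is that a non-degenerate, finite-range (hence superexponential-moment) walk drifts ballistically into $\partial X$ and, with overwhelming probability, escapes any fixed horoball around the parabolic fixed point $\xi = \lim p^n o$ in finitely many steps. Quantitatively, a first-return-to-horoball argument in the spirit of Blachère--Haïssinsky--Mathieu \cite{blachere2011harmonic} should yield an exponential upper bound $G(e, p^n) \le C' e^{-c n}$, equivalently $d_G(e, p^n) \ge c\, n - C''$, for some $c > 0$. This contradicts the logarithmic identity displayed above once $n$ is large enough. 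The delicate point is the exponential decay of the Green function along a parabolic orbit: it uses crucially that horoballs are exponentially thin in the hyperbolic metric, so that the walk, having positive drift $\ell(\mu)$, cannot linger near $\xi$ long enough for its return probabilities to match the polynomial Patterson--Sullivan density. Once this estimate is in place, the conflict between logarithmic and linear growth closes the contradiction and forces $\nu \perp \kappa$.
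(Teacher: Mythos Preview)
The paper does not prove this corollary; it is quoted verbatim from \cite{GekhtmanTiozzo} with no argument reproduced, so there is nothing in the present paper to compare your attempt against.

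On its own merits, your outline is the standard one and is correct: invoke the ergodic dichotomy, assume $\nu\sim\kappa$, pass to condition~(4) of Theorem~\ref{BHMTheorem 1.4}, and contradict the logarithmic growth $d(o,p^{n}o)\asymp\log n$ of a parabolic orbit in $X$ against a linear lower bound for $d_G(e,p^{n})$. The weak point is your justification of that linear lower bound. The ``first-return-to-horoball'' picture and the appeal to positive drift in $X$ are not really what drives the estimate $G(e,p^{n})\le C'e^{-cn}$: this is a statement about the walk on the \emph{group} $\Gamma$, not about horoball excursions in $X$ (note that $p^{n}o$ stays on the same horosphere as $o$, so no deep excursion is forced). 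The two ingredients you actually need are: (i) for non-amenable $\Gamma$ and finitely supported symmetric $\mu$, Kesten's spectral-radius bound $\mu^{*m}(g)\le C\rho^{m}$ together with $\mu^{*m}(g)=0$ for $m$ below a constant multiple of $|g|_S$ gives $d_G(e,g)\ge c\,|g|_S$; and (ii) peripheral (parabolic) subgroups of a geometrically finite group are undistorted in the word metric, so $|p^{n}|_S\asymp n$. Together these yield $d_G(e,p^{n})\ge c'n$, contradicting the $O(\log n)$ bound coming from~(4). Extending step~(i) from finite support to general superexponential moment requires additional care, and your horoball heuristic does not substitute for it.
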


In this paper, we are interested in $X = \mathbb{H}^d$ and $\Gamma < \textup{Isom}(\mathbb{H}^d)$ a lattice.
In this case it is well known that the Lebesgue measure $\lambda$ on $\partial \mathbb{H}^d$ is a Patterson-Sullivan measure. 

To see why, recall that if $g \in \textup{Isom}(\mathbb{H}^d)$ then (see e.g. \cite[Proposition 3.9]{Quint})
$$\frac{d g \lambda}{d \lambda}(\xi) = e^{-(d-1) \beta_\xi(go, o)} \qquad \forall \xi \in \partial \mathbb{H}^d$$
so $\lambda$ is quasiconformal and, since $\Gamma$ is a lattice, $v = n-1$. 

The corollary then immediately shows that 
if $\Gamma < \textup{Isom}(\mathbb{H}^d)$ is a nonuniform lattice and $\mu$ an admissible, finitely supported measure on $\Gamma$, then the hitting measure $\nu$ is singular with respect to Lebesgue, and moreover
$h(\mu) < (d-1)\ell(\mu).$

\section{Auxiliary results}

\subsection{Inequality for the entropy}

\begin{proposition}\label{prop:entropy-inequality}
    Let $\Gamma$ be a discrete group, and $N \triangleleft \Gamma$ its normal subgroup. Let $\mu_\Gamma$ be a probability measure on $\Gamma$ and $\mu_\Lambda$ be the pushforward of $\mu_\Gamma$ to $\Lambda = \Gamma/N$ through the homomorphism $\varphi : \Gamma \to \Gamma/N$. Then $h(\mu_{\Lambda}) \leq h(\mu_\Gamma)$.
\end{proposition}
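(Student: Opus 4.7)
The plan is to reduce the inequality between asymptotic entropies to an inequality between the ordinary entropies of the $n$-fold convolutions, and then pass to the limit. The two ingredients I would need are: first, that pushforward by the homomorphism $\varphi$ commutes with convolution of measures, i.e. $\varphi_{*}(\mu^{*n}_\Gamma) = \mu_\Lambda^{*n}$; and second, that pushing forward by an arbitrary map can only decrease the ordinary entropy, i.e. $H(\varphi_*\lambda) \le H(\lambda)$ for any probability measure $\lambda$ on a countable set.

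First I would verify the convolution identity. For any $h \in \Lambda$,
\[
\varphi_*(\mu_\Gamma * \mu_\Gamma)(h) = \sum_{g_1 g_2 \in \varphi^{-1}(h)} \mu_\Gamma(g_1)\mu_\Gamma(g_2) = \sum_{h_1 h_2 = h} \mu_\Lambda(h_1)\mu_\Lambda(h_2) = (\mu_\Lambda * \mu_\Lambda)(h),
\]
where the middle equality uses that $\varphi$ is a homomorphism (so $\varphi(g_1g_2) = \varphi(g_1)\varphi(g_2)$) and then regroups the sum by the values $h_i = \varphi(g_i)$. Iterating gives $\varphi_*(\mu_\Gamma^{*n}) = \mu_\Lambda^{*n}$ for all $n \ge 1$.

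Next I would establish the monotonicity of ordinary entropy under pushforward. For a probability measure $\lambda$ on a countable set $A$ and any map $\varphi : A \to B$,
\[
H(\varphi_*\lambda) = -\sum_{b \in B} \Bigl(\sum_{a \in \varphi^{-1}(b)} \lambda(a)\Bigr) \log \Bigl(\sum_{a \in \varphi^{-1}(b)} \lambda(a)\Bigr) \le -\sum_{b \in B}\sum_{a\in \varphi^{-1}(b)} \lambda(a)\log \lambda(a) = H(\lambda),
\]
which follows from the elementary inequality $(\sum_i p_i)\log(\sum_i p_i) \ge \sum_i p_i \log p_i$ for nonnegative $p_i$ (equivalently, from concavity of $-x\log x$ and $\log(\sum p_i) \ge \log p_j$ term by term).

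Combining the two steps, $H(\mu_\Lambda^{*n}) = H(\varphi_*(\mu_\Gamma^{*n})) \le H(\mu_\Gamma^{*n})$ for every $n$, and dividing by $n$ and passing to the limit gives $h(\mu_\Lambda) \le h(\mu_\Gamma)$. There is no serious obstacle here — the only thing to be slightly careful about is checking the homomorphism identity cleanly and noting that all sums involved are over countable sets so no measure-theoretic subtlety enters.
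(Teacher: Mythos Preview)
Your proof is correct and follows essentially the same approach as the paper: both reduce to the elementary inequality $H(\varphi_*\lambda)\le H(\lambda)$ via the term-by-term bound $\log(\sum_i p_i)\ge \log p_j$, then divide by $n$ and pass to the limit. Your version is slightly more explicit in that you isolate and prove the identity $\varphi_*(\mu_\Gamma^{*n})=\mu_\Lambda^{*n}$, whereas the paper handles the $n$-fold convolution by saying ``a very similar argument'' applies.
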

\begin{proof}
First, we note that $\mu_\Lambda(\gamma N) = \sum_{h \in \gamma N} \mu_\Gamma(h)$. Using the monotonicity of $\log$, we have
$-\log \mu_\Lambda(\gamma N) = -\log\left(\sum_{h \in \gamma N} \mu_\Gamma(h)\right) \leq -\log \mu_\Gamma(h)$ and therefore
\begin{equation*}
\begin{split}
    -\mu_\Lambda(\gamma N) \log \mu_\Lambda(\gamma N) = &\ -\left(\sum_{h \in \gamma N} \mu_\Gamma(h)\right) \cdot \log\left(\sum_{h \in \gamma N} \mu_\Gamma(h)\right) \\
    = &\ -\sum_{h \in \gamma N} \mu_\Gamma(h)  \log \mu_\Lambda(\gamma N) \\ 
    \leq &\ -\sum_{h \in \gamma N} \mu_\Gamma(h) \log \mu_\Gamma(h).
\end{split}
\end{equation*}

Hence,
\begin{equation*}
\begin{split}
    H(\mu_\Lambda) =&\ -\sum_{\lambda \in \Lambda = \Gamma/N} \mu_\Lambda(\lambda) \log \mu_\Lambda(\lambda) = - \sum_{\gamma N} \mu_\Lambda(\gamma N) \log \mu_\Lambda(\gamma N) \\
    \leq &\ -\sum_{\gamma N} \sum_{h \in \gamma N} \mu_\Gamma(h) \cdot \log \mu_\Gamma(h) \\
    = &- \sum_{g \in \Gamma} \mu_\Gamma(g) \cdot \log\left( \mu_\Gamma(g) \right) = H(\mu_\Gamma).
\end{split}
\end{equation*}

A very similar argument shows that $H(\mu_\Lambda^{*n}) \leq H(\mu_\Gamma^{*n})$. Dividing the latter inequality by $n$ and taking the limit $\lim_{n \to \infty}$, we complete the proof.
\end{proof}

\subsection{Algebraic vs geometric convergence of hyperbolic lattices}\label{sec:alg-geom-conv}

Here we summarize several specific facts about algebraic and geometric convergence that we will need to prove our main theorems. For some of the claims, we did not find proofs in the literature, so we provide a detailed argument here, although all these facts are known. In particular, part (1) is stated by Kapovich in his book, see \cite[Exercise 8.12]{Kap-book}. Under the assumption that all $\Gamma_n$ are lattices, part (2) follows from \cite[Prop. E.2.3]{BP-lectures} and \cite[Prop. E.3.1]{BP-lectures}, and part (3) follows from \cite[Theorem E.4.8]{BP-lectures}, as we explain in the proof below.

\begin{theorem}\label{th:geom-alg-conv}
    Let $\Gamma_n, \Gamma < \Isom(\HH^d)$, $d \ge 3$, be torsion-free discrete groups. Assume that $\Gamma_n \to \Gamma$ geometrically and $\Gamma$ is a lattice. Then the following assertions hold.
    \begin{enumerate}
        \item There exists $N > 0$ such that for all $n > N$ the manifolds $M_n=\HH^d/\Gamma_n$ have finite volume, i.e. $\Gamma_n$ are also lattices.
        \item If, moreover, $\Gamma$ is a uniform lattice, or non-uniform but $d \ge 4$, then $\Gamma_n$ are eventually isomorphic to $\Gamma$, i.e. there is $N > 0$ such that $\Gamma_n \cong \Gamma$ for all $n > N$.
        \item If $d=3$ and $\Gamma$ is non-uniform, then there is $N > 0$ such that for $n>N$ each $\Gamma_n$ is obtained from $\Gamma$ by hyperbolic Dehn filling and therefore $\Gamma_n = \rho_n(\Gamma)$ which algebraically converge to the initial representation $\rho(\Gamma)=\Gamma$.
    \end{enumerate}
\end{theorem}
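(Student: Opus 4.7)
The plan is to prove parts (1)--(3) in sequence, with part (1) being the geometric foundation and parts (2)--(3) distinguishing between the rigid and non-rigid cases. For part (1), I choose $\varepsilon \in (0, \varepsilon_d]$ small enough that the thin part $M^{\le \varepsilon}$ of $M = \HH^d/\Gamma$ consists solely of cusp neighbourhoods (possible by Remark \ref{rem:thin-small} since $\Gamma$ is a lattice), so that $M^{\ge \varepsilon}$ is compact, of finite volume, and homotopy equivalent to $M$. Corollary \ref{cor:phi-n} then furnishes $(1+\varepsilon_n)$-bi-Lipschitz embeddings $\phi_n \colon M^{\ge \varepsilon} \hookrightarrow M_n = \HH^d/\Gamma_n$ with $\varepsilon_n \to 0$. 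To show $\vol(M_n) < \infty$, I analyze the complement $M_n \setminus \phi_n(M^{\ge \varepsilon})$: its boundary is nearly isometric to the horospherical cusp boundary $\partial M^{\ge \varepsilon}$, whose points have injectivity radius close to $\varepsilon/2$. Each connected component of the complement must therefore lie in the $\varepsilon$-thin part of $M_n$, and the Margulis lemma (Lemma \ref{lem:Margulis}) together with the classification of thin components forces each to be either a truncated cusp or a Margulis tube, both of which have finite volume. Since there are only finitely many such components (one per cusp of $M$), we conclude $\vol(M_n) < \infty$.

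For part (2), the bi-Lipschitz maps $\phi_n$ lift to near-isometric diffeomorphisms of the universal cover $\HH^d$ which conjugate deck transformations in $\Gamma$ to elements of $\Gamma_n$. Combined with the convergence of Dirichlet side-pairing generators from Theorem \ref{th:equivalent-convergences}(3), this produces homomorphisms $\rho_n \colon \Gamma \to \Gamma_n \subset \Isom(\HH^d)$ that algebraically converge to the defining inclusion $\rho$. I then invoke local rigidity: Weil's theorem for uniform lattices in all $d \ge 3$, and Garland--Raghunathan for non-uniform lattices in $d \ge 4$, each guarantees that $\rho$ has no non-trivial small deformations. Hence for $n$ large, $\rho_n$ is conjugate to $\rho$ in $\Isom(\HH^d)$, and in particular $\Gamma_n = \rho_n(\Gamma)$ is isomorphic to $\Gamma$.

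For part (3), in the remaining case $d = 3$ with $\Gamma$ non-uniform, local rigidity fails and the approximating $\Gamma_n$ must instead be identified as hyperbolic Dehn fillings. From the thin-part analysis of part (1), each component of $M_n \setminus \phi_n(M^{\ge \varepsilon})$ is either a cusp of $M_n$ inherited from a cusp of $M$, or a Margulis tube around a short closed geodesic replacing a cusp of $M$; in the latter case the meridian of the attached solid torus specifies the Dehn filling slope. This identifies $M_n$ topologically as a Dehn filling of $M$, and the associated surjection $f_n \colon \Gamma = \pi_1(M) \twoheadrightarrow \pi_1(M_n) = \Gamma_n$ is precisely the quotient by the normal closure of the filling slopes, as in Remark \ref{rem:Dehn-deformation}. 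Composing $f_n$ with the inclusion $\Gamma_n \hookrightarrow \Isom(\HH^3)$ yields representations $\rho_n$ with $\rho_n(\Gamma) = \Gamma_n$, and algebraic convergence $\rho_n(\gamma) \to \rho(\gamma)$ follows from the uniform convergence of the lifted bi-Lipschitz maps on compact subsets of $\HH^3$ containing $o$ and $\gamma \cdot o$ for each fixed $\gamma \in \Gamma$.

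The most delicate step is expected to be part (1), specifically ruling out that some component of $M_n \setminus \phi_n(M^{\ge \varepsilon})$ might be geometrically infinite or an infinite-volume geometrically finite end. This requires a careful quantitative comparison between the boundary geometry of $\phi_n(M^{\ge \varepsilon})$ and the horospherical cross-sections of $M$, so that every boundary component genuinely lies in the $\varepsilon$-Margulis regime of $M_n$. Once this volume finiteness is established, parts (2) and (3) proceed via the standard combination of bi-Lipschitz approximation with, respectively, local rigidity and Thurston's hyperbolic Dehn filling theorem (Theorem \ref{th:Thurston-Dehn}).
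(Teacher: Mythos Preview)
Your Part~(1) has a genuine gap at the step ``each connected component of the complement must therefore lie in the $\varepsilon$-thin part of $M_n$.'' You correctly argue that the boundary hypersurfaces $F'_i = \phi_n(F_i)$ lie in (or near) the thin part of $M_n$, but this does \emph{not} imply that the component $C_i$ they bound lies in the thin part. A thin component $T$ of $M_n$ (cusp or tube) has its own boundary $\partial T$ sitting on the thick--thin interface of $M_n$, and there is no a~priori reason $\partial T \subset N_n = \phi_n(M^{\ge\varepsilon})$; if some component of $\partial T$ lies in $C_i$, then $C_i$ escapes $T$ into the $\varepsilon$-thick part of $M_n$, which could in principle be noncompact and of infinite volume. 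Your ``careful quantitative comparison'' would at best place $F'_i$ in the thin regime --- it does not control what lies beyond $F'_i$ on the far side from $N_n$.

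The paper avoids this by arguing topologically rather than metrically. It splits into two cases according to whether the inclusion $F'_i \hookrightarrow C_i$ is $\pi_1$-injective. If it is, then $\pi_1(F'_i)$ (virtually $\Z^{d-1}$) injects into $\Gamma_n$, forcing it to be parabolic and $C_i$ to be a cusp. If it is not, then for $d=3$ the torus $F'_i$ is compressible in $C_i$ and hence bounds a solid torus; for $d\ge 4$ the paper proves two homological lemmas showing first that $[F'_i]$ is torsion in $H_{d-1}(C_i)$ (using asphericity and factoring through a lower-rank torus), and second that a connected manifold whose compact connected boundary is null in $H_{d-1}$ must itself be compact (via Mayer--Vietoris on the double). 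This dichotomy handles $C_i$ directly, without ever needing $C_i$ to sit inside a thin component. Your local-rigidity route for Part~(2) is a legitimate alternative to the paper's citations of Benedetti--Petronio, but note that to apply Garland--Raghunathan you must first know that $(\phi_n)_*\colon \Gamma \to \Gamma_n$ is surjective, which in the non-uniform case again requires the structural analysis of the $C_i$ from Part~(1).
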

\begin{proof}
    All parts follow from the thick-thin decomposition and the tools of geometric convergence, but part (1) also requires some standard facts from algebraic topology. By assumption $\Gamma$ is a lattice, and thus the thick part $M^{\ge \varepsilon}$ of $M$, where $\varepsilon \le \varepsilon_d$, is compact by Proposition \ref{prop:thick-thin-finite-volume}.
    \medskip
 
    We will first consider the case where $\Gamma$ is a uniform lattice, i.e. where the manifold $M = \HH^d/\Gamma$ is closed. Taking the compact set $K = M$, we obtain by Corollary \ref{cor:phi-n} that there is $N > 0$ such that for all $n > N$ there exist the bi-Lipschitz diffeomorphisms $\phi_n : M \hookrightarrow M_n = \HH^d/\Gamma_n$. Then $\phi_n(M) = N_n$ is a closed $d$-submanifold of the connected $d$-manifold $M_n$. This implies that $N_n$ is both an open and closed subset of the connected manifold $M_n$, whence  $N_n = M_n$. Thus, $M_n$ itself is a compact hyperbolic $d$-manifold diffeomorphic to $M$, and then the Mostow rigidity implies that $\Gamma_n$ is isomorphic to $\Gamma$ for all $n > N$. This proves part (1) and (2) for the case of $\Gamma$ being a uniform lattice. 
    \medskip

    Henceforth, assume that $M=\HH^d/\Gamma$ has finite volume and is noncompact, i.e. it has (finitely many) cusps. In this case, we can choose $\varepsilon \le \varepsilon_d$ so that the thick part $M^{\ge \varepsilon}$ is a compact connected manifold whose boundary components are closed flat $(d-1)$-manifolds $F_i$ (see Remark \ref{rem:thin-small} based on Proposition \ref{prop:thick-thin-finite-volume}). We will separately treat the cases $d=3$ and $d>3$. For simplicity, we can pass simultaneously to orientation preserving subgroups of $\Gamma_n$ and $\Gamma$; so without loss of generality we assume that all manifolds $M_n$ and $M$ are orientable. 

    Consider the compact set $K = M^{\ge \varepsilon}$ and the submanifolds $N_n=\phi_n(K)$ for the bi-Lipschitz diffeomorphisms $\phi_n \colon K \hookrightarrow M_n$ (which exist for all $n>N$ for some $N>0$) from Corollary \ref{cor:phi-n}. The complement $M_n - N_n$ has connected components $C_i$ each having the boundary flat components $F'_i$, diffeomorphic to closed flats $F_i$; see Figure \ref{fig:Mn-to-M}. One can show that each $C_i$ is an aspherical manifold based on the fact that $M_n$, $N_n$, and $F'_i$ are all aspherical.

    \medskip
    {\em Case $d=3$.}  In this dimension, the situation is simpler: $F_i$ are just flat $2$-dimensional tori $T_i$. We will denote their images by $T'_i$. Since $T'_i$ is the boundary of $C_i$, by \cite[Proposition 9.3.6]{Mar} we have only two options. 

    (a) $T'_i$ bounds a solid torus, so $C_i$ itself is a solid torus and therefore compact.

    (b) $T'_i$ is incompressible in $C_i$, and hence $\pi_1$-injective by Hempel \cite[Lemma~6.1]{Hem-book}. Since $T_i$ is $\pi_1$-injectuve for $M$, we thus obtain that $T'_i$ is $\pi_1$-injective for the whole $M_n$. Then $T'_i$ is a cusp cross-section, since the subgroup $\Z \times \Z = \pi_1 (T'_i)$  of $\pi_1 (M_n)=\Gamma_n$ must preserve a point on the ideal boundary by \cite[Corollary 4.2.4]{Mar}. Hence, the entire $C_i$ is just a finite-volume truncated cusp of $M_n$.

    Thus, $M_n$ consists of the compact block $N_n = \phi_n(M^{\geq \varepsilon})$ and of several pieces $C_i$ each of them being either compact or finite volume. Then $M_n$ has finite volume. 

    \begin{figure}
        \centering
        \begin{tikzpicture}[scale=0.9]

  \def\rBlue{1.2426406871}   
  \coordinate (CB1) at ( 2.2961005942,  2.2961005942);
  \coordinate (CB2) at (-2.2961005942,  2.2961005942);
  \coordinate (CB3) at (-2.2961005942, -2.2961005942);
  \coordinate (CB4) at ( 2.2961005942, -2.2961005942);

  \def\rRed{2.5226892458}    
  \coordinate (CRright) at ( 3.9196888946,  0.0);   
  \coordinate (CRtop)   at ( 0.0,  3.9196888946);   
  \coordinate (CRleft)  at (-3.9196888946,  0.0);   
  \coordinate (CRbot)   at ( 0.0, -3.9196888946);   

  \begin{scope}
    \clip (0,0) circle (3);
    \draw[very thick,blue] (CB1) circle (\rBlue);
    \draw[very thick,blue] (CB2) circle (\rBlue);
    \draw[very thick,blue] (CB3) circle (\rBlue);
    \draw[very thick,blue] (CB4) circle (\rBlue);
  \end{scope}

  \begin{scope}
    \clip (0,0) circle (3);
    \draw[very thick,red]
      (CRtop) ++(-118.5:\rRed)
      arc[start angle=-118.5, end angle=-61.5, radius=\rRed];
  \end{scope}

  \begin{scope}
    \clip (0,0) circle (3);
    \draw[very thick,red]
      (CRleft) ++(-28.5:\rRed)
      arc[start angle=-28.5, end angle=28.5, radius=\rRed];
  \end{scope}

  \begin{scope}
    \clip (0,0) circle (3);
    \draw[very thick,red]
      (CRbot) ++(61.5:\rRed)
      arc[start angle=61.5, end angle=118.5, radius=\rRed];
  \end{scope}

  \begin{scope}
    \clip (0,0) circle (3);
    \draw[very thick,red]
      (CRright) ++(151.5:\rRed)
      arc[start angle=151.5, end angle=208.5, radius=\rRed];
  \end{scope}

  \coordinate (A) at ( 7, 3);
  \coordinate (B) at ( 7, 3);
  \coordinate (C) at (4, 0);
  \coordinate (D) at ( 7,-3);

  \draw[very thick,blue]
    (A) arc[start angle=180, end angle=270, radius=3];

  \draw[very thick,blue]
    (B) arc[start angle=0, end angle=-90, radius=3];

  \draw[very thick,blue]
    (C) arc[start angle=90, end angle=0, radius=3];

  \draw[very thick,blue]
    (D) arc[start angle=180, end angle=90, radius=3];

  \begin{scope}
    \draw[very thick,red]
      (4,0) ++(-15:1.5)
      arc[start angle=-15, end angle=15, radius=1.5];
  \end{scope}

  \begin{scope}
    \draw[very thick,red]
      (7,3) ++(-104.5:1.5)
      arc[start angle=-104.5, end angle=-75.5, radius=1.5];
  \end{scope}

  \begin{scope}
    \draw[very thick,red]
      (7,-3) ++(75:1.5)
      arc[start angle=75, end angle=105, radius=1.5];
  \end{scope}

  \begin{scope}
    \draw[very thick,red]
      (10,0) ++(165:1.5)
      arc[start angle=165.5, end angle=195, radius=1.5];
  \end{scope}

\draw[->, thick] (2,-3.6) -- node[above] {$n \to \infty$} (5,-3.6);

\draw [thick, left hook-latex](5,-2) -- node[above] {$\phi_n$} (3,-2);

\node at (0,-3.6) { $M_n$}  ;

\node at (0,-2.3) { $C_4$}  ;
\node at (-2.3,0) { $C_1$}  ;
\node at (2.3,0) { $C_3$}  ;
\node at (0,2.3) { $C_2$}  ;

\node at (0,0) { $N_n$}  ;

\node at (7,-3.6) { $M$}  ;
\node at (7,0) { $M^{\ge \varepsilon}$}  ;

\node at (0,-1.1) { {\color{red} $F'_4$}}  ;
\node at (0,1.1) { {\color{red} $F'_2$}}  ;
\node at (1.1,0) { {\color{red} $F'_3$}}  ;
\node at (-1.1,0) { {\color{red} $F'_1$}}  ;

\node at (7,-1.1) { {\color{red} $F_4$}}  ;
\node at (7,1.1) { {\color{red} $F_2$}}  ;
\node at (8.1,0) { {\color{red} $F_3$}}  ;
\node at (7-1.1,0) { {\color{red} $F_1$}}  ;

\end{tikzpicture}
        \caption{Geometric convergence of hyperbolic $d$-manifolds $M_n$ (left) to a finite-volume manifold $M$ with cusps (right). The thick part $M^{\ge \varepsilon}$ of $M$ is a connected compact manifold with closed flat boundary components $F_i$. The image of $M^{\ge \varepsilon}$ under the diffeomorphism $\phi_n$ gives $N_n \subset M_n$ (left) bounded by $F'_i$, and $M_n - N_n$ is a disjoint union of $C_i$. }
        \label{fig:Mn-to-M}
    \end{figure}
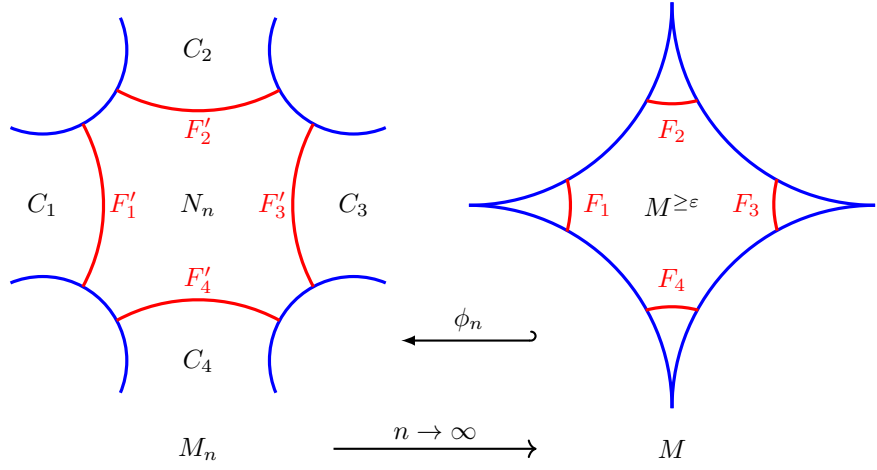
    
    \medskip
    {\em Case $d>3$.} In this case, the flat boundary components of $M^{\ge \varepsilon}$ are some closed $(d-1)$-manifolds $F_i$ that are finitely covered by flat tori $T_i$ of rank $d-1$. These flat manifolds $F_i$ are embedded $\pi_1$-injectively in $M$, and, as in the case $d=3$, we again analyze their images $F'_i = \phi_n(F_i)$ in $M_n$. 

    We again have two options.

    (a) $F'_i$ is not $\pi_1$-injective in $C_i$. Then $C_i$ is compact as follows from the following two lemmas.

    \vspace{-1.5em}
    \begin{adjustwidth}{0.7cm}{0.7cm}
    \begin{lemma}\label{lem:pi1-homology}
    Let $X$ be a connected aspherical $d$-manifold and $F \subset X$ a closed, connected, embedded flat hypersurface. Assume that the inclusion $i \colon F \hookrightarrow X$ is not $\pi_1$-injective. Then $i_*([F])$ is a torsion element of $H_{d-1}(X;\mathbb Z)$. In particular, the image of $[F]$ in $H_{d-1}(X;\mathbb Q)$ is zero.
    \end{lemma}

    \begin{proof}
    We write $\Gamma=\pi_1(X)$. Since $X$ is aspherical, it is a $K(\Gamma,1)$-space and $\Gamma$ is torsion-free. By the Bieberbach theorem, $F$ is finitely covered by a torus $\mathbb T^{d-1}$.

    \medskip
    \noindent\underline{Case 1}: $F$ is a torus $\mathbb T^{d-1}$.
    Then
    $i_*:\pi_1(\mathbb T^{d-1})\cong\mathbb Z^{d-1}\to \Gamma$ has nontrivial kernel by assumption, so its image is a free abelian group $\mathbb Z^{k}$ with $k<d-1$.

    Since $\mathbb T^{d-1}$ and $X$ are Eilenberg--MacLane spaces $K(\mathbb Z^{d-1},1)$ and $K(\Gamma,1)$, the map $i$ is, up to homotopy, determined by $i_*$ and factors through the classifying space $K(\mathbb Z^{k},1)\cong\mathbb T^k$. Thus there exist maps
    $$
    \mathbb T^{d-1} \xrightarrow{g} \mathbb T^{k} \xrightarrow{f} X
    $$
    inducing the homomorphisms $\mathbb Z^{d-1} \twoheadrightarrow \mathbb Z^{k} \to \Gamma$ on $\pi_1$, such that $i$ is homotopic to $f\circ g$.

    On homology with $\mathbb Z$-coefficients we have $H_{d-1}(\mathbb T^{d-1};\mathbb Z)\cong\mathbb Z$, generated by the fundamental class $[\mathbb T^{d-1}]$, while $H_{d-1}(\mathbb T^{k};\mathbb Z) = 0$ since $k<d-1$. Hence
    $g_*[\mathbb T^{d-1}] = 0 \in H_{d-1}(\mathbb T^k;\mathbb Z),$
    and therefore
    $$
    i_*[\mathbb T^{d-1}] = f_* g_*[\mathbb T^{d-1}] = 0 \in H_{d-1}(X;\mathbb Z).
    $$

\medskip
\noindent\underline{Case 2}: $F$ is only virtually a torus. In this case, $F$ is a closed flat $(d-1)$-manifold with a finite covering
$p:\mathbb T^{d-1}\longrightarrow F.$
The covering $p$ induces a short exact sequence
$$
1 \longrightarrow \pi_1(\mathbb T^{d-1}) \cong \mathbb Z^{d-1}
\longrightarrow \pi_1(F) \longrightarrow A \longrightarrow 1,
$$
where $A$ is finite and $\pi_1(F)$ is a Bieberbach group, hence torsion-free.

Let $\varphi = i_*:\pi_1(F)\to\pi_1(X)$. By assumption $\varphi$ is not injective, so
$$
K := \ker(\varphi) \subset \pi_1(F)
$$
is nontrivial. Suppose first that $K\cap \pi_1(\mathbb T^{d-1})$ is trivial. Then the restriction of the projection $\pi_1(F)\to A$ to $K$ is injective, so $K$ embeds into the finite group $A$ and is therefore finite. But $\pi_1(F)$ is torsion-free, so it cannot contain a nontrivial finite subgroup. This contradiction shows that
$K\cap \pi_1(\mathbb T^{d-1})$ is non-trivial.

Thus, the composition
$$
\pi_1(\mathbb T^{d-1}) \xrightarrow{p_*} \pi_1(F) \xrightarrow{i_*} \pi_1(X)
$$
has nontrivial kernel, i.e.\ the induced map
$i\circ p : \mathbb T^{d-1} \to X$ is not $\pi_1$-injective. By Case~1, the induced map on $(d-1)$-dimensional homology satisfies
$$
(i\circ p)_*[\mathbb T^{d-1}] = 0 \in H_{d-1}(X;\mathbb Z).
$$

On the other hand, since $p$ is a finite covering of degree $m>0$, we have
$$
p_*[\mathbb T^{d-1}] = m\,[F] \in H_{d-1}(F;\mathbb Z).
$$
Therefore
$$
0 = (i\circ p)_*[\mathbb T^{d-1}]
= i_* p_*[\mathbb T^{d-1}]
= i_*\bigl(m\,[F]\bigr)
= m\,i_*[F] \in H_{d-1}(X;\mathbb Z).
$$
Hence $i_*[F]$ is a torsion element of $H_{d-1}(X;\mathbb Z)$. In particular, its image in $H_{d-1}(X;\mathbb Q)$ is zero.
\end{proof}
\end{adjustwidth}

    \vspace{-1.5em}
    \begin{adjustwidth}{0.7cm}{0.7cm}
    \begin{lemma}\label{lem:homology-compact}
    If $F = \partial X$ is the compact connected boundary of a connected $d$-manifold $X$ and $[F]=0$ in the $H_{d-1}(X)$, then $X$ is compact.
    \end{lemma}
    \begin{proof}
    We can work over the field $\Z_2=\Z/2\Z$ or over the field $\Q$ if $X$ is oriented.
    Consider the double $DX$ of $X$ along the boundary. Applying the Mayer--Vietoris to the classical decomposition $DX = A \cup B$, where $A \cap B = F \times (-1,1) \simeq F$, and $A$ and $B$ are thickenings of $X$ by the collars $F \times (-1,0]$ and $F \times [0,1)$, respectively, we obtain that $[F]\in \mathrm{im}(\delta)$ where $\delta : H_d(DX) \to H_{d-1}(F).$ Indeed, since $F$ has dimension $d-1$, $H_d(F)=0$, so the relevant part of the Mayer--Vietoris sequence is
    $$
    0 \to H_d(A)\oplus H_d(B) \longrightarrow H_d(DX) \longrightarrow H_{d-1}(F)
    \xrightarrow{j_*} H_{d-1}(A)\oplus H_{d-1}(B).
    $$
    Here $j_*$ is induced by the inclusions $F\hookrightarrow A$ and
    $F\hookrightarrow B$. Identifying $A$ and $B$ with $X$, we can write
    $$
    j_* = (i_*, -i_*): H_{d-1}(F)\longrightarrow H_{d-1}(X)\oplus H_{d-1}(X),
    $$
    where $i_*:H_{d-1}(F)\to H_{d-1}(X)$ is the map induced by the inclusion $i: F\hookrightarrow X$. By assumption, $i_*[F]=0$ in $H_{d-1}(X)$, so
    $$
    j_*[F] = (i_*[F], -i_*[F]) = (0,0).
    $$
    Thus $[F]\in \ker(j_*)$. By exactness, $\ker(j_*)$ is the image of $\delta$, whence $[F]\in \mathrm{im}(\delta)$.  On the other hand, $[F] \ne 0$ in $H_{d-1}(F)$ and thus $H_d(DX) \ne 0$.

    Now $DX$ is a connected $d$-manifold without boundary and has nonzero
    top-dimensional homology $H_d(DX)$. It is a standard fact that a connected noncompact $d$-manifold without boundary has $H_d=0$ (see, for example, Hatcher \cite[Proposition~3.29]{Hat}). Therefore $DX$ must be compact.
    \end{proof}    
    \end{adjustwidth}

    (b) $F'_i$ is $\pi_1$-injective in $C_i$. Then it is also $\pi_1$-injective for the entire $M_n$ and we conclude as in the case (b) for $d=3$ by showing the component $C_i$ is a finite-volume truncated cusp of $M_n$.
    
    This proves part (1).

    \medskip
    Part (2) follows from \cite[Prop. E.2.3]{BP-lectures} and \cite[Prop. E.3.1]{BP-lectures}.

    Part (3) follows from \cite[Theorem E.4.8]{BP-lectures} which states the following. Recall that $\mathcal{F}$ is the set of all complete finite-volume hyperbolic $3$-manifolds with the topology of geometric convergence. Let $\mathscr{F}_3(c) \subset \mathcal{F}$ be the subset of manifolds $M$ with $\vol M \le c$. Then there exists a finite collection of finite-volume manifolds $X_1, \ldots, X_k$ with cusps such that $X \in \mathscr{F}_3(c)$ is obtained from some $X_i$ by hyperbolic Dehn surgery.
    
    In our case, we have $M_n = \HH^3/\Gamma_n$ geometrically converge to $M = \HH^3/\Gamma$, whence, for $n$ large enough, the volumes $\vol M_n$ are bounded from above and thus we can apply \cite[Theorem E.4.8]{BP-lectures}.
\end{proof}

\subsection{Compatible measures on convergent sequences of lattices}\label{sec:compatible}

\begin{definition}
    Assume that $\Gamma_n = \rho_n(\Gamma)$ algebraically converge to $\Gamma=\rho_\infty(\Gamma)$. Measures $\mu_n$ on $\Gamma_n$ are said to be compatible if, for all $n$, we have $\mu_n = (\rho_n)_* \mu$ of some measure $\mu$ on $\Gamma$.
\end{definition}

\begin{lemma}\label{lem:compatible-measures}
    Let $\{\Gamma_n\}_{n=1}^\infty$ be a sequence of pairwise non-conjugate lattices in $\Isom(\HH^3)$ geometrically converging to a lattice $\Gamma$. Then there is $N>0$ such that for any measure $\mu$ on $\Gamma$, the remaining tail $\{\Gamma_n\}^\infty_{n=N+1}$ of this sequence can be equipped with compatible measures $\mu_n$. Moreover, if for some $m > N$, the lattice $\Gamma_m$ is equipped with a measure $\mu_m$ compatible with some measure $\mu$ on $\Gamma$, then we can equip all the other members of the subsequence $\{\Gamma_n\}^\infty_{n=N+1}$ with measures $\mu_n$ compatible with $\mu_m$.
\end{lemma}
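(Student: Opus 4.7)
The plan is to derive the lemma directly from Theorem \ref{th:geom-alg-conv} once we rule out the uniform case for $\Gamma$. First, I would check that $\Gamma$ must be non-uniform: if $\Gamma$ were cocompact, Theorem \ref{th:geom-alg-conv}(2) would give $\Gamma_n \cong \Gamma$ for all sufficiently large $n$, and Mostow rigidity for closed hyperbolic $3$-manifolds would promote each such abstract isomorphism to a conjugation inside $\Isom(\HH^3)$, contradicting the hypothesis that the $\Gamma_n$ are pairwise non-conjugate. Hence $\Gamma$ is non-uniform, and Theorem \ref{th:geom-alg-conv}(3) supplies a threshold $N>0$ together with homomorphisms $\rho_n : \Gamma \to \mathrm{PSL}_2(\CC)$ for $n>N$ such that $\rho_n(\Gamma)=\Gamma_n$ and $\rho_n \to \rho_\infty = \id_\Gamma$ algebraically. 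Importantly, $N$ depends only on the convergent sequence $\Gamma_n \to \Gamma$, not on any measure.

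For the first assertion, given any probability measure $\mu$ on $\Gamma$, I would simply set $\mu_n := (\rho_n)_* \mu$ for every $n>N$. By the very definition of compatibility stated just before the lemma, the resulting family $\{\mu_n\}_{n>N}$ is compatible with $\mu$, so the tail $\{\Gamma_n\}_{n=N+1}^\infty$ is equipped with compatible measures as required. Note that the possible non-injectivity of $\rho_n$ (which indeed occurs for Dehn filling representations, see Remark \ref{rem:Dehn-deformation}) plays no role here since we only need pushforward measures, not pullbacks.

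For the \emph{moreover} part, suppose that for some $m>N$ we are given $\mu_m$ on $\Gamma_m$ compatible with a measure $\mu$ on $\Gamma$, which by definition means $\mu_m = (\rho_m)_* \mu$. I would then reuse this same $\mu$ and define $\mu_n := (\rho_n)_* \mu$ for every $n>N$ with $n \ne m$. Every measure in the family $\{\mu_n\}_{n>N}$, including the prescribed $\mu_m$, is a pushforward of the single measure $\mu$ via the representations $\rho_n$, so by definition the whole family is pairwise compatible.

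The one step that does any real work is the reduction to the non-uniform case, which is forced immediately by Mostow rigidity together with the pairwise non-conjugacy hypothesis; after that, the entire argument is a straightforward application of Theorem \ref{th:geom-alg-conv}(3) and the definition of compatibility. I do not anticipate any serious obstacle, provided Theorem \ref{th:geom-alg-conv}(3) has already been established in the form stated.
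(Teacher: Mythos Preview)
Your proposal is correct and follows essentially the same approach as the paper: invoke Theorem~\ref{th:geom-alg-conv}(3) to obtain the representations $\rho_n$, then push forward $\mu$ via $\rho_n$. You make explicit the reduction to the non-uniform case via Mostow rigidity, which the paper's proof of the lemma leaves implicit (it is spelled out only later, in the proof of Theorem~\ref{th:main-Kleinian}).
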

\begin{proof}
    First of all, note that Theorem \ref{th:geom-alg-conv} shows that there is $N > 0$ such that for $n>N$ each $\Gamma_n$ is obtained from $\Gamma$ by hyperbolic Dehn surgery and therefore $\Gamma_n = \rho_n(\Gamma)$ converge algebraically to the representation $\rho_\infty(\Gamma)=\Gamma$. Then we set $\mu_n := (\rho_n)_* \mu$, which proves the first part. For the second part, we note that $\mu = (\rho_m)^* \mu_m$ on $\Gamma$, and then the first part applies.
\end{proof}

\section{Proof of main theorems }

\subsection{Kleinian groups and proof of Theorem~\ref{th:main-Kleinian}}

Let $\Gamma_n < \Isom(\HH^3)$ be an infinite sequence of lattices, geometrically converging to a lattice $\Gamma$. The assumption of $\Gamma_n$ being pairwise non-conjugate implies, by Theorem \ref{th:geom-alg-conv}, that $\Gamma$ is non-uniform and all but finitely many $\Gamma_n$ are obtained from $\Gamma$ by hyperbolic Dehn surgery. Hence, removing finitely many exceptions, we may assume that $\Gamma_n = \rho_n(\Gamma)$ algebraically converge to $\Gamma=\rho_\infty(\Gamma)$. Lemma~\ref{lem:compatible-measures} shows that we can equip our subsequence by compatible measures, and this can be done for an arbitrary given measure $\mu_m$ on any $\Gamma_m$.

From now on, we assume that the entire sequence $\Gamma_n$ is equipped with compatible measures $\mu_n$ obtained via pushforward from some measure $\mu$ on $\Gamma$. Let us denote by $h_n$ the entropy $h(\mu_n)$, and set $\ell_n = \ell(\mu_n)$, $\ell = \ell(\mu)$. The hitting measures driven by $\mu_n$ and $\mu$ will be denoted by $\nu_n$ and $\nu$, respectively.

Using Proposition~\ref{prop:entropy-inequality} and the fact that $\Gamma_n = \rho_n(\Gamma)=\Gamma/\ker\rho_n$, we obtain that $h_n \le h$. From Theorem \ref{BHMTheorem 1.4} we know that $\nu_n \perp \lambda$ if and only if $h_n < \ell_n v_n$. Here $v_n = v = 2$ for all $n$. The drift is continuous for random walks on subgroups of $\mathrm{GL}_2(\CC)$ (that is, the continuity of the drift applies to finitely subgroups of both $\mathrm{PSL}_2(\CC)$ and $\mathrm{PSL}_2(\R)$), as shown by the recent result of Bocker-Neto and Viana \cite{BNV17}, whence we have that $\ell_n \to \ell$ as $n \to \infty$. Finally, Theorem \ref{BHMTheorem 1.4} (see also Corollary \ref{singularcorollary} and the discussion after it) implies that $h<2\ell$, since the lattice $\Gamma < \Isom(\HH^3)$ is non-uniform.
Therefore,
$$
2\ell_n \xrightarrow{n\to \infty} 2\ell > h \ge h_n.
$$
Hence, there is $N>0$ such that $2\ell_n > h_n$ for all $n>N$, 
so $\nu_{\mu_n}$ is singular for these values of $n$. \qed

\subsection{Hyperbolic reflection groups and proof of Theorem \ref{th:main-Coxeter}}
In this case, the proof is a simple (special) version of the proof Theorem \ref{th:main-Kleinian}. Indeed, if the edge $e$ in Coxeter polyhedra $P_n$ is formed by facets corresponding to reflections $r_1^{(n)}$ and  $r_2^{(n)}$, our reflection groups satisfy: $\Gamma_n = \Gamma/N_n$, where $N_n$ is the normal closure of $(r_1 r_2)^n \in \Gamma$.
By Proposition \ref{prop:entropy-inequality}, we have the inequality $h_n < h$ for all $n$. The conclusion of the proof remains the same as in Theorem \ref{th:main-Kleinian}. \qed

\subsection{Fuchsian groups and proof of Theorem \ref{th:main-Fuchsian}}

For Fuchsian groups, the result is stated already in the setting of algebraic convergence. This allows to apply Proposition \ref{prop:entropy-inequality} directly and then again conclude in the same way as it was done in the proof of Theorem \ref{th:main-Kleinian}. \qed

\section{Examples}\label{sec:examples}

Throughout this section, $\Sigma_{g,a,b}$ denotes an orientable surface of genus $g \ge 1$ with $a$ punctures and $b$ boundary components; if $b=0$, we will simply write $\Sigma_{g,a}$, and $\Sigma_g$ denotes a closed genus-$g$ surface.

\subsection{Geometric convergence of infinite-covolume Fuchsian groups to a lattice}

Let $\Sigma$ denote an infinite-area surface of genus $1$ with one {\em funnel}, i.e. infinite end, and let $\alpha$ be a simple closed geodesic separating the funnel, see Figure \ref{fig:infinite-to-S110}. Cutting off the funnel along $\alpha$ would give us a surface $\Sigma_{1,0,1}$ in the above notation.

To construct a geometrically converging sequence of infinite-covolume Fuchsian groups, we take a surface $\Sigma_n$ with the same topology as $\Sigma$, but replace $\alpha$ with a closed curve $\alpha_n$ such that the length of $\alpha_n$ tends to $0$ as $n \to \infty$. For sufficiently large $n$ and an appropriate choice of $\varepsilon > 0$, the thin part $(\Sigma_n)^{\leq \varepsilon}$ of $\Sigma_n$ is the neighborhood of $\alpha_n$ (bounded by blue arcs, see the top of Figure~\ref{fig:infinite-to-S110}). Then the thick part $(\Sigma_n)^{\geq \varepsilon}$ has two connected components, one of them being the surface $\Sigma_{1,0,1}$. Denote this component by $T_n$. Taking the basepoints $x_n \in T_n$ so that they ``stay apart'' (uniformly) from the geodesic $\alpha_n$, we obtain the pointed Gromov--Hausdorff convergence of $(\Sigma_n, x_n)$ to $(\Sigma_{1,1}, x)$ for some $x \in {\Sigma_{1,1}}$. By Theorem \ref{th:equivalent-convergences}, this is equivalent to the geometric convergence of $\pi_1 (\Sigma_n)$ to $\pi_1 (\Sigma_{1,1})$, which acts as a lattice on $\HH^2$. We note that the groups $\pi_1 (\Sigma_n)$ and $\pi_1 (\Sigma_{1,1})$ are all abstractly isomorphic to $\mathrm{F}_2$, a free group of rank $2$.

\begin{figure}
    \centering
    \includegraphics[width=0.5\linewidth]{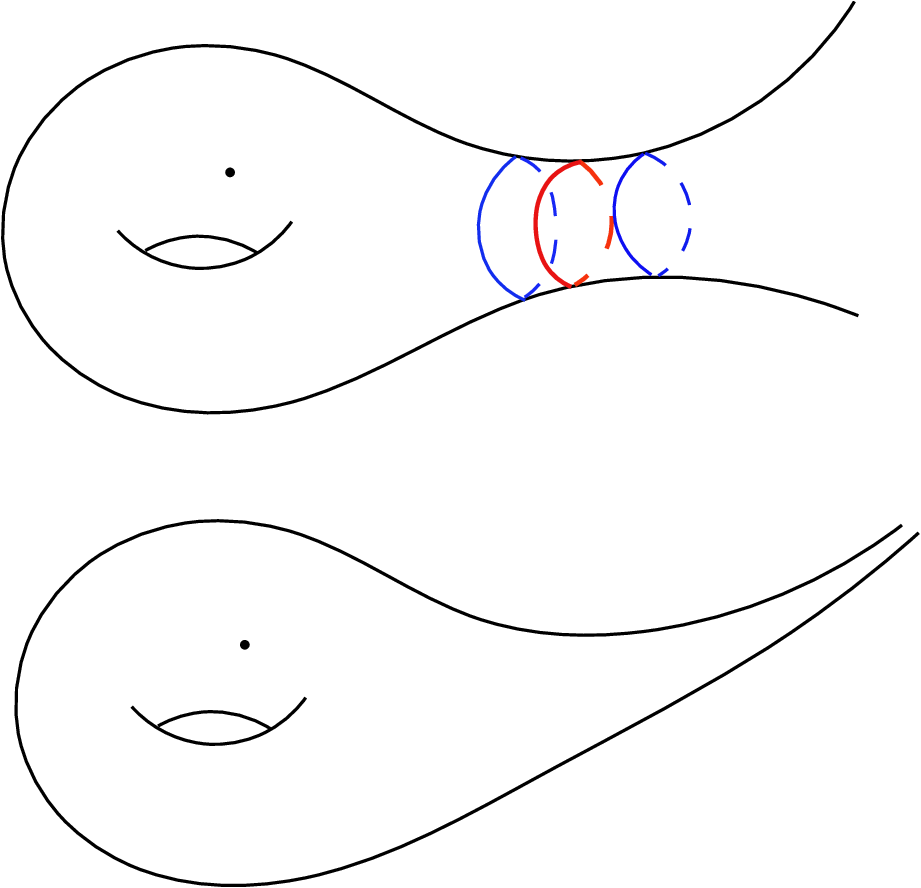}
    \caption{Pinching the red geodesic $\alpha$ on $\Sigma_{1,0,1}$ (top), we obtain the geometric limit being the once-punctured torus $\Sigma_{1,1}$ (bottom).}
    \label{fig:infinite-to-S110}
\end{figure}

This geometric convergence can also be justified in the setting of the pointed Gromov--Hausdorff convergence of the corresponding Dirichlet fundamental polyhedra. Consider the Poincare disc model of $\HH^2$ with $o$ the origin. We choose a metric on $\Sigma_{1,1}$ such that the Dirichlet fundamental domain $D_{\Gamma}(o)$ of $\Gamma=\pi_1(\Sigma_{1,1})$ is the regular ideal quadrilateral shown in Figure \ref{fig:Dirichlet} (right). Then we take another quadrilateral bounded by four pairwise disjoint geodesics such that all the common perpendiculars $\alpha_n$ between the adjacent (consecutive) sides have the same length, see Figure \ref{fig:Dirichlet} (left). These red perpendiculars $\alpha_n$ project under the $\Gamma_n = \pi_1 (\Sigma_n)$-action precisely to the red geodesic $\alpha_n$ illustrated in Figure \ref{fig:infinite-to-S110} (top). Shrinking simultaneously all these $\alpha_n$, we obtain the pointed Gromov--Hausdorff convergence $D_{\Gamma_n}(o) \xrightarrow{pGH} D_{\Gamma}(o)$. By Theorem \ref{th:equivalent-convergences}, this is equivalent to the geometric convergence of $\Gamma_n$ to $\Gamma$.
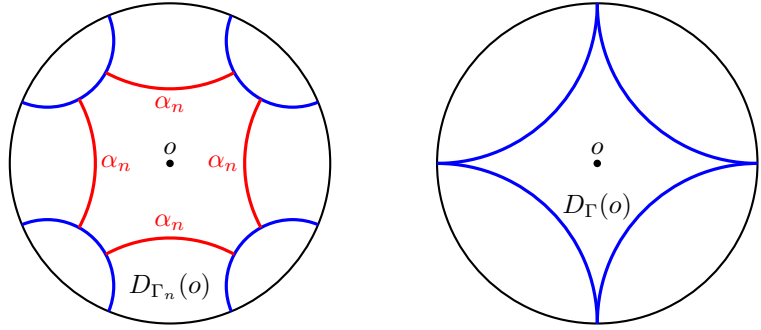
\begin{figure}
    \centering
    \begin{tikzpicture}[scale=0.71]
  \draw[thick] (8,0) circle (3);
  \coordinate (A) at ( 8, 3);
  \coordinate (B) at ( 8, 3);
  \coordinate (C) at (5, 0);
  \coordinate (D) at ( 8,-3);

  \draw[very thick,blue]
    (A) arc[start angle=180, end angle=270, radius=3];

  \draw[very thick,blue]
    (B) arc[start angle=0, end angle=-90, radius=3];

  \draw[very thick,blue]
    (C) arc[start angle=90, end angle=0, radius=3];

  \draw[very thick,blue]
    (D) arc[start angle=180, end angle=90, radius=3];

  \draw[thick] (0,0) circle (3);

  \def\rBlue{1.2426406871}   
  \coordinate (CB1) at ( 2.2961005942,  2.2961005942);
  \coordinate (CB2) at (-2.2961005942,  2.2961005942);
  \coordinate (CB3) at (-2.2961005942, -2.2961005942);
  \coordinate (CB4) at ( 2.2961005942, -2.2961005942);

  \def\rRed{2.5226892458}    
  \coordinate (CRright) at ( 3.9196888946,  0.0);   
  \coordinate (CRtop)   at ( 0.0,  3.9196888946);   
  \coordinate (CRleft)  at (-3.9196888946,  0.0);   
  \coordinate (CRbot)   at ( 0.0, -3.9196888946);   

  \begin{scope}
    \clip (0,0) circle (3);
    \draw[very thick,blue] (CB1) circle (\rBlue);
    \draw[very thick,blue] (CB2) circle (\rBlue);
    \draw[very thick,blue] (CB3) circle (\rBlue);
    \draw[very thick,blue] (CB4) circle (\rBlue);
  \end{scope}


  \begin{scope}
    \clip (0,0) circle (3);
    \draw[very thick,red]
      (CRtop) ++(-118.5:\rRed)
      arc[start angle=-118.5, end angle=-61.5, radius=\rRed];
  \end{scope}

  \begin{scope}
    \clip (0,0) circle (3);
    \draw[very thick,red]
      (CRleft) ++(-28.5:\rRed)
      arc[start angle=-28.5, end angle=28.5, radius=\rRed];
  \end{scope}

  \begin{scope}
    \clip (0,0) circle (3);
    \draw[very thick,red]
      (CRbot) ++(61.5:\rRed)
      arc[start angle=61.5, end angle=118.5, radius=\rRed];
  \end{scope}

  \begin{scope}
    \clip (0,0) circle (3);
    \draw[very thick,red]
      (CRright) ++(151.5:\rRed)
      arc[start angle=151.5, end angle=208.5, radius=\rRed];
  \end{scope}
\node at (0,-2.3) { $D_{\Gamma_n}(o)$}  ;
\node at (8,-0.8) {$D_{\Gamma}(o)$ }  ;

\node at (0,-1.1) { {\color{red} $\alpha_n$}}  ;
\node at (0,1.1) { {\color{red} $\alpha_n$}}  ;
\node at (1,0) { {\color{red} $\alpha_n$}}  ;
\node at (-1,0) { {\color{red} $\alpha_n$}}  ;

\fill (0,0) circle (0.07) node[above] {$o$};
\fill (8,0) circle (0.07) node[above] {$o$};
\end{tikzpicture}
    \caption{When the length of all red arcs $\alpha_n$ (left) tends simultaneously to $0$, the domains $D_{\Gamma_n}(o)$ (left) shrink to the regular ideal quadrilateral $D_{\Gamma}(o)$ (right), and therefore $D_{\Gamma_n}(o) \xrightarrow{pGH} D_{\Gamma}(o)$.}
    \label{fig:Dirichlet}
\end{figure}

\subsection{Non-equivalence of algebraic and geometric convergence of cocompact Fuchsian groups}

Consider an orientable closed surface $\Sigma_g$ with a separating closed geodesic $\alpha$ on it such that it divides $\Sigma_g$ into the union of $\Sigma_{1,0,1}$ and $\Sigma_{g-1, 0, 1}$, see Figure \ref{fig:s3-to-s11} illustrating the case $g=3$. Pinching $\alpha$ gives rise to an infinite sequence of closed surfaces $(\Sigma_g)_n$ that geometrically converge to a once-punctured torus $\Sigma_{1,1}$. We can justify it similarly to the previous example. Indeed, denote the corresponding geodesic on $(\Sigma_g)_n$ by $\alpha_n$, where $\mathrm{Len}(\alpha_n) \to 0$. For sufficiently large $n$ and an appropriate choice of $\varepsilon > 0$, the thick parts $(\Sigma_g)_n^{\ge \varepsilon}$ have two  connected components, where one of them, denoted by $T_n$ corresponds to the left piece $(\Sigma_{1,0,1})_n$. Taking the basepoints $x_n \in Th_n$ so that they stay apart from the geodesic $\alpha_n$, we obtain the pointed Gromov--Hausdorff convergence of $((\Sigma_g)_n, x_n)$ to $(\Sigma_{1,1}, x)$ for some $x \in {\Sigma_{1,1}}$. By Theorem \ref{th:equivalent-convergences}, this is equivalent to the geometric convergence $\pi_1 ((\Sigma_g)_n) \to \pi_1 (\Sigma_{1,1}) \cong \mathrm{F}_2$, a free group of rank $2$.

\begin{figure}
    \centering
    \includegraphics[width=0.75\linewidth]{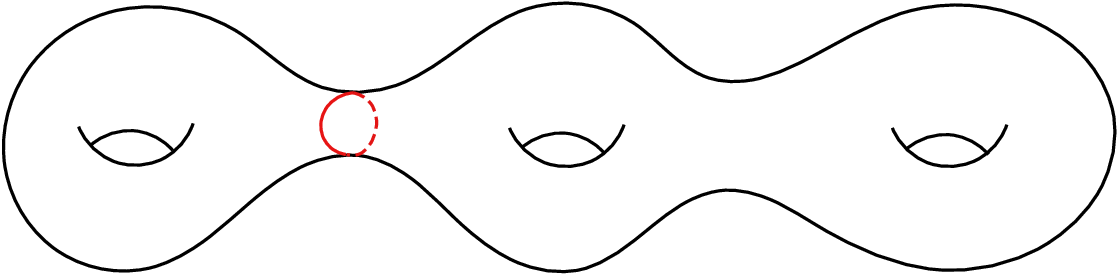}
    \caption{Pinching the red arc on $\Sigma_3$ gives a sequence $(\Sigma_3)_n$ that converges geometrically to the once-punctured torus~$\Sigma_{1,1}$.}
    \label{fig:s3-to-s11}
\end{figure}

On the other hand, all $\pi_1 ((\Sigma_g)_n)$ are isomoprhic to $\pi_1 (\Sigma_g)$, being a one-relator group of four generators, and thus, there are no surjections from $\pi_1 (\Sigma_{1,1}) \cong \mathrm{F}_2$, a free group of rank $2$, onto $\pi_1 (\Sigma_g)$.

\section{Concluding remarks and further questions}

Our main results (Theorems \ref{th:main-Kleinian}, \ref{th:main-3-orbifold}, and \ref{th:main-Fuchsian}) heavily rely on {\em algebraic convergence}, which is essential in the proof of Proposition \ref{prop:entropy-inequality}. In dimension $d = 3$, this convergence is equivalent to the geometric one, as follows from Theorem \ref{th:geom-alg-conv}, part (3). On the other hand, in Section \ref{sec:examples}, we provide an example showing that there are geometrically converging cocompact Fuchsian groups that do not algebraically converge to the same limit. In Theorem \ref{th:geom-alg-conv}, part (1), we also showed that infinite-covolume lattices in $\Isom(\HH^d)$, $d \ge 3$, cannot geommetrically converge to a lattice, while it can happen if $d=2$ (see an example in Section \ref{sec:examples}). These phenomena motivate the following questions which we state below.

Let $\Gamma_n$ be a sequence of finitely generated Fuchsian groups that geometrically converge to a lattice $\Gamma < \mathrm{PSL}_2(\R)$. Assume that the measures $\mu_n$ are supported on generating sets $S_n$ (of $\Gamma_n$) that converge geometrically to a generating set $S$ of $\Gamma$ such that for each sequence of generators $s_n \in S_n$ converging to a generator $s \in S$, the corresponding numbers $p_n$ on $s_n$ also converge to some number $p$ on $s$.

\begin{question}\label{q:conv-entropy}
    Is it true that $h_n \to h$? Is it true at least when all but finitely many $\Gamma_n$ are lattices?
\end{question}

\noindent A positive answer to the first question aims to replace Proposition \ref{prop:entropy-inequality} in the proof of the singularity conjecture for groups that are geometrically close to a group for which the singularity conjecture is proven.

\begin{question}\label{q:sing-Fuchs}
    Assume that all but finitely many $\Gamma_n$ are infinite-covolume. Does it imply that the hitting measures $\nu_{\mu_n}$ are singular with respect to the Patterson--Sullivan measure $\kappa$ on $\partial \HH^2=\mathbb{S}^1$?
\end{question}

\noindent Assuming a positive answer to Question~\ref{q:conv-entropy}, we can answer Question \ref{q:sing-Fuchs} in affirmative using the fact that $h_n \to h$, $\ell_n \to \ell$, and $v_n \to v$ (the convergence of critical exponents follows from \cite{Sul84} and \cite{Tuk84} as explained in \cite[Corollary 4.7 (b)]{KK23}).

\bibliography{biblio.bib}{}
\bibliographystyle{siam}

\end{document}